\newcommand{\rR}{\mathbb{R}}
\newcommand{\eps}{{\varepsilon }}
\newcommand{\cH}{\mathcal{H}}
\newcommand{\cI}{\mathcal{I}}
\newcommand{\rT}{\mathbb{T}}
\newcommand{\tu}{\tilde u}
\newcommand{\tw}{\tilde w}
\newcommand{\pt}{\partial_t}
\newcommand{\ptt}{\partial_{tt}}
\crefname{hypothesis}{Hypothesis}{Hypotheses}
\title{A posteriori error estimates for wave maps into spheres\thanks{Submitted to the editors DATE.
\funding{J.G.\ thanks
 the German Research Foundation (DFG) for support of the project via DFG grant GI 1131/1-1. E. M.-B. is funded by the DFG via the grant MA 7559/1-1 and appreciates the support}}}
\author{Jan Giesselmann\thanks{Technical University of Darmstadt, Department of Mathematics, Dolivostr. 15, 64293 Darmstadt, Germany, 
  (\email{giesselmann@mathematik.tu-darmstadt.de}),}
\and Elena M{\"a}der-Baumdicker\thanks{Technical University of Darmstadt, Department of Mathematics,  Schlossgartenstr. 7,
 64289 Darmstadt, Germany }
  (\email{maeder-baumdicker@mathematik.tu-darmstadt.de}),
  \and
  David Jakob Stonner\thanks{Technical University of Darmstadt}(\email{davidjakob.stonner@stud.tu-darmstadt.de})
}
\begin{document}

\maketitle

\begin{abstract}
 We provide a posteriori error estimates in the energy norm for temporal semi-discretisations of wave maps into spheres that are based on the angular momentum formulation. Our analysis is based on novel weak-strong stability estimates which we combine with suitable reconstructions of the numerical solution. We present time-adaptive numerical simulations based on the a posteriori error estimators for solutions involving blow-up.
\end{abstract}

\begin{keywords}
  wave maps, weak-strong stability, a posterirori error estimates, blow-up.
\end{keywords}

\begin{AMS}
65M16, 35L71 , 35B44
	
\end{AMS}

 \section{Introduction} 
 This paper is concerned with the numerical approximation of  wave maps, i.e.,  semi-linear wave equations with the point-wise constraint that the solution takes values in some given target manifold.
 They arise as critical points of a Lagrange functional for manifold valued functions and serve as model problems in general relativity \cite{Aitchison_Hey_2002} and in particle physics \cite{Carroll_2019}.
 We refer to \cite{Shatah_Struwe_1998, Tataru_2004, Krieger_2008} and the introduction in \cite{Rodnianski_Sterbenz_2010} for an overview on the general theory of wave maps. The monograph \cite{Geba_Grillakis_2017} contains a detailed introduction to the recent development in the analysis of wave maps.

There are two key challenges in numerically approximating wave maps:
 One is the point-wise constraint that make the function spaces in which solutions are sought non-linear and, the second,
 is gradient-blow up that leads to highly localized phenomena in space and time that need to be 
 suitably resolved by numerical methods.
 A variety of different numerical methods that deal with the point-wise constraint using different approaches such as projections, penalties and Lagrange multipliers has been proposed 
 \cite{
 Bartels_2009, Bartels_2016, Bartels_Feng_Prohl_2007, Bartels_Lubich_Prohl_2009, Bizon_Chmaj_Tabor_2001, Cohen_Verdier_2016, Kycia_2012}
 and for the methods of Bartels and coworkers a priori convergence analysis is available, in the sense that stability estimates are proven that imply convergence of subsequences to weak solutions.
 
It seems desirable to obtain more quantitative information on the accuracy of numerical approximations and,
given the highly localized dynamics of gradient blow-up,  we aim to provide a posteriori error estimates.
We will focus on a scheme whose a priori analysis was studied in \cite{Bartels_2015} and \cite{Karper_Weber_2014}.
For the schemes at hand, convergence results are available even beyond gradient-blow up, i.e.\ limits of subsequences of numerical solutions are weak solutions,
but quantitative estimates beyond singularity formation seem to be out of reach due to discontinuity of the solution operator \cite{DAncona_Georgiev_2004}, see Section 2 for details.
Thus, we focus on estimates for the solution up to the blow up time.

Similarly to what was done in \cite{Georgoulis_Lakkis_Makridakis_Virtanen_2016, Kyza_Makridakis_2011}, we study errors entering via temporal discretization and, indeed,
we restrict our study to semi-discretization in time. 
While the development of estimators for spatial discretization errors is certainly an important task in its own right, it is beyond the scope of this work.
It will probably require a suitable extension of elliptic reconstruction techniques to harmonic maps, that does not seem available yet. 
Indeed, little has been proven concerning convergence of numerical schemes for harmonic maps. In certain situations, uniqueness and regularity of hamonic maps can be ensured, which allows to show high order convergence of so-called geodesic finite elements
\cite{GrohsHarderingSander2015}.
In the general case, only existence of weak harmonic maps is guaranteed and for general triangulations and minimal regularity solutions, an a posteriori criterion is needed in order to guarantee weak convergence of numerical solutions \cite{Bartels2005}.
To the best of our knowledge no quantitative a posteriori error bounds  are available for numerical approximation schemes for harmonic maps and harmonic map heat flows.

For a long time, a posteriori error control for (linear) wave equations  was limited to first order schemes \cite{Bernardi_Suli_2005, Georgoulis_Lakkis_Makridakis_2013}.
Earlier works on adaptivity for wave equations can be found in \cite{Adjerid_2002,Bangerth_Rannacher_2001, Suli_1996}.
Quite recently, a posteriori error estimates for second order multi-step time discretisations of the linear wave equation were derived \cite{Georgoulis_Lakkis_Makridakis_Virtanen_2016}.

Due to the appearance of singularities, our goals are similar to those pursued in \cite{Cangiani_Georgoulis_Kyza_Metcalfe_2016,Kyza_Makridakis_2011} that study blow-up for semi-linear parabolic equations.
We also refer to these papers for earlier works on numerical approximation of blow-up solutions of nonlinear PDEs such as nonlinear Schr\"odinger equations or semi-linear parabolic problems.
Let us mention that the blow-up mechanism in wave maps is rather different from the blow-up mechanisms in  nonlinear Schr\"odinger equations or semi-linear parabolic problems
where the $L^\infty$-norm blows up in finite time.

In order to derive the desired a posteriori error estimates, we use two main ingredients:
Firstly, a suitable reconstruction of the numerical solution that can be understood as the exact solution of a perturbed version of the \emph{angular momentum formulation} for wave maps into spheres and, secondly, a novel weak-strong stability principle. 
One key feature of our reconstruction is to ensure that the estimator is formally of optimal order, i.e., that it converges to zero with the same rate as the true error on equidistant meshes.


The remainder of this work is organized as follows: We review some facts from the analysis of wave maps in Section 2 and introduce the problem and basic notation in Section 3. Section 4 provides two stability estimates, one is based on a first order reformulation of the problem that is  available when the target manifold is $S^2$ and the other Theorem covers the general case.
In Section 5, we provide a posteriori error estimates for a numerical scheme that is based on the first order reformulation of the problem. The main contribution of this section is the construction of suitable reconstructions of the numerical solution; whereas computable bounds for the residuals, that appear when the reconstruction is inserted into the wave map problem, are postponed to the appendix.
Finally, in Section 6, we report on numerical experiments using adaptive time stepping based on the a posteriori error estimators derived before.

\section{Background on wave maps}

A specific feature of wave maps is that depending on the size of initial data (in suitable Sobolev norms) and the dimension
 of the target manifold either strong solutions may exist on arbitrarily long time intervals or solutions may exhibit gradient blow-up in finite time, see \cite{Rodnianski_Sterbenz_2010}. But there are also  larger classes of solutions, namely \textit{distributional} or \textit{weak solutions}, and in particular finite energy weak solutions. 
 Note that weak solutions for example exist as accumulation points of subsequences of numerical schemes in \cite{Bartels_2015}.  Existence of global weak solutions for finite energy initial data in $2+1$ dimensions was established in \cite{Mueller_Struwe_1996}.
 We will introduce our precise notion of finite energy weak solution in the next section. If a (global) finite energy weak solution and a non-global strong solution exist, then uniqueness results as in \cite{Struwe_1999, Widmayer_2015} guarantee that both solutions agree until the appearance of the singularity. This is, under certain circumstances, strong solutions can be extended as weak solutions through the singularity. Conditions such as an energy inequality are needed to get this uniqueness result because, in general, weak solutions are not unique \cite{Widmayer_2015}. 
 
 We provide weak-strong stability results Theorems \ref{thm:stab} and \ref{thm:stabN} that can be seen as  more quantitative versions of the (finite energy) weak-strong-uniqueness results in \cite{Widmayer_2015,Struwe_1999}. Indeed,  Theorems \ref{thm:stab} and \ref{thm:stabN} assert that
as long as there exists  a strong solution for certain initial data, there are explicit bounds for the difference between this solution and solutions to perturbed problems even if those are only 
finite energy weak solutions.  Note that this quantitative control requires the same regularity as the uniqueness result \cite{Struwe_1999}, see Remark \ref{rem:regs} for more details.

We employ Theorem  \ref{thm:stab} (that assumes that the target manifold is $S^2$) in our a posteriori error analysis but
since weak-strong stability results are interesting in their own right, we present a weak-strong stability result for general target manifolds in Theorem \ref{thm:stabN}. While it can be thought of as an extension of Theorem \ref{thm:stab} (where the target manifold is $S^2$) there are some significant differences on the technical level that will be discussed in Remark~\ref{rem:stabdiff} and Remark~\ref{rem:stabdiff1B}. Those are the reason why we base our a posteriori error analysis on Theorem \ref{thm:stab}. This is discussed in more detail in Remark \ref{rem:stabdiff2}.

  Uniqueness and stability properties of wave maps do not only depend on the target manifold but also on the dimension of the domain. Uniqueness  for wave maps in $1+1$ dimensions was shown in \cite{Zhou_1999} and non-uniqueness in the supercritical dimension $3+1$ was shown in 
 \cite{Shatah_Tahvildar-Zadeh_1994, Widmayer_2015}. To the best of our knowledge, in the critical dimension $2+1$ uniquenss of weak solutions to finite energy data is unknown and it is unclear whether imposing an energy inequality restores uniqueness in $2$ or more space dimensions.
 An interesting observation in $2+1$ dimensions is that solutions (even if they are unique) do not depend continuously on the initial data in the energy norm \cite{DAncona_Georgiev_2004}. It should be noted that our weak-strong stability results use the energy norm and are valid in arbitrarily many space dimensions.
 It follows from \cite{DAncona_Georgiev_2004} that any such stability result, i.e.\ any bound for the difference of two solutions, measured in the energy norm, needs to involve a stronger norm (than the energy norm) of at least one of the solutions. This is a fundamental obstacle to deriving a posteriori error estimates (in the energy norm) that are convergent (i.e.\ go to $0$ for $\tau, h \searrow 0$, where $\tau,\, h$ denote spatial and temporal mesh width respectively) in case the exact solution does not have any additional regularity (beyond being a finite energy weak solution).

 \section{Problem statement and notation}
 For some bounded Lipschitz domain $\Omega \subset \rR^m$,  some final time $\rT>0$ and  some $n$-dimensional submanifold without boundary $N \subset \rR^\ell$ a wave-map is a map
 \begin{equation}\label{wavemap}
  u : (0,\rT) \times \Omega \rightarrow N,\ \text{ satisfying } \quad 
  \partial_t^2 u - \Delta u  \perp T_uN  \quad \text{in } (0,\rT) \times \Omega
 \end{equation}
where $T_uN$ denotes the tangent space of N at $u$.
Equation \eqref{wavemap} needs to be complemented with initial and boundary data.
To this end, maps $u_0 : \Omega \rightarrow N$ and $u_1 : \Omega \rightarrow \rR^\ell$ such that $u_1(x) \in T_{u_0(x)}N$ for all $x \in \Omega$ are fixed and one requires
\begin{equation}\label{inic}
 u(0,\cdot)=u_0, \quad \partial_t u(0,\cdot) = u_1,
\end{equation}
and homogeneous Neumann boundary conditions 
\begin{equation}\label{boundary}
\partial_n u  =0 \quad \text{ on } (0,\rT) \times \partial \Omega.
\end{equation}
Strong solutions of the wave map equation satisfy an energy conservation principle 
\begin{equation}
 E[u(t),\partial_t u(t)]:= 
 \frac 12 \int_\Omega |\partial_t u(t)|^2 + |\nabla u(t)|^2 \, dx =E[u_0,u_1]
\end{equation}
and are critical points of the Lagrangian
\begin{equation}
 L[u(t),\partial_t u(t)]:= 
 \frac 12 \int_0^{\mathbb{T}} \int_\Omega |\partial_t u(t)|^2 - |\nabla u(t)|^2 \, dx.
\end{equation}
We reformulate the wave map equation (\ref{wavemap}) in order to see that this is a semi-linear wave equation.
Let $A_p(\cdot,\cdot): T_pN\times T_pN\to (T_pN)^\perp$ be the second fundamental form of the compact submanifold $N$ at a point $p\in N$. We denote the variables on $[0,\rT)\times\Omega$ by $(t,x) = (x^\alpha), 0\leq \alpha\leq m$. We raise and lower indices with the Minkowski metric $(\eta_{\alpha\beta}) = \operatorname{diag}(-1,1,...,1)$ and we sum over repeated indices.  Then a (strong) wave map is a map $u=(u^1,...,u^\ell):[0,\rT)\times\Omega\to N \hookrightarrow \rR^\ell$ that satisfies
\begin{align}\label{wms0}
 \partial_t^2u-\Delta u = A[u](Du,Du),
\end{align}
where $A[u](Du,Du)$ stands for $\left(A^i_{jk}\big|_u\partial_\alpha u^j\partial^\alpha u^k\right)\big|_{1\leq i\leq \ell}$, see \cite{Shatah_Struwe_1998}.\\

A significant part of our analysis will consider the case that the target manifold $N$ is the $2$-sphere $S^2 \subset \rR^3$ and in this case \eqref{wms0} reduces to
\begin{align}
 \partial_t^2 u - \Delta u  = (|\nabla u|^2- |\partial_t u|^2) & u \label{wms}\\
 \text{with point-wise constraint } |u(t,x)| &=1.
\end{align}
Let us also mention that, using angular momentum $\omega:=\partial_t u \times  u,$  the wave map equation can be phrased as \cite{Karper_Weber_2014}
\begin{equation}\label{wmom}
\begin{aligned}
 \partial_t u = u \times \omega\ \ \ \ \text{ and }\  \ \ \ \partial_t \omega = \Delta u \times u.
\end{aligned}
\end{equation}
This variant is the one underlying the numerical scheme that we will study.


 \section{A quantitative stability estimate}
 In this section, we establish a weak-strong stability result that complements weak-strong uniqueness results in \cite{Widmayer_2015,Struwe_1999} by providing bounds for differences between solutions. In particular, our estimates quantify the the impact of residuals that is crucial for the use of stability results in proving a posteriori error estimates.
 We prove weak-strong stability in the general case as well as in the special case of $N=S^2$ since we believe that the former nicely highlights the general geometric structure while the latter proof uses very elementary techniques and does not require any background in differential geometry.\\[-0.2cm]

 We denote by $(\cdot, \cdot)$ the $L^2$ inner product on $(0,\rT) \times \Omega$ and by $(\cdot, \cdot)_\Omega$ the $L^2$ inner product on $ \Omega$.
  \begin{definition}\label{def:weak}
 Given $u_0 \in H^1(\Omega,N)$ and $u_1 \in L^2(\Omega, \mathbb{R}^\ell)$ so that $u_1(x) \in T_{u_0(x)}N$ for almost all $x$, 
 we call a function  $u \in L^2([0,\rT) \times \Omega)$ with values in $N$ and $\nabla u, \partial_t u \in   L^\infty([0,\rT), L^2(\Omega))$ a \emph{finite energy weak solution} of \eqref{wavemap}-\eqref{boundary} provided the following conditions are satisfied
  \begin{enumerate}
   \item $u(0,\cdot)=u_0$
   \item $(\partial_t u , \partial_t \psi) - (\nabla u , \nabla \psi)= - \big(A[u](Du,Du),\psi\big) - (u_1, \psi(0,\cdot))_\Omega$ \\for all $ \psi \in C_c^\infty ([0,\rT)\times \bar \Omega)$
   \item  $ E[u(t),\partial_t u(t)] \leq  E[u_0,u_1]$ for almost all $0<t<\rT$.
  \end{enumerate}
  We say that $u$ satisfies a \textit{local}  energy condition provided
  \[  E[u(t),\partial_t u(t)] \leq  E[u(s),\partial_t u(s)] \text{ for almost all } 0<s < t<\rT\]
 \end{definition}

 \begin{remark}\label{rem:weak}
 Note that the weak formulation in Definition \ref{def:weak} actually holds for all $\psi \in  W^{1,1}(0,\rT;L^2(\Omega)) \cap L^1(0,\rT; L^\infty \cap H^{1}(\Omega))$ with $\psi(\rT,\cdot)=0,$ due to a density argument where pass  to the limit in $\big(A[u](Du,Du),\psi\big)$ using majorised convergence, see the appendix of \cite{FMS1998} for details.
 \end{remark}
 
 We are going to compare a finite energy weak solution $u$ to a strong solution $(\tu,\tw)$ of the perturbed problem
 \begin{align}\label{perturbed}
\partial_t \tu = \tu \times \tw + r_u  , \qquad 
\partial_t \tw = \Delta \tu \times \tu +r_w.
 \end{align}

\begin{lemma}\label{2801} For any two sufficiently regular functions $\tu:\Omega \rightarrow S^2,\ \tw : \Omega \rightarrow \rR^3$ the following identities hold:
 \begin{enumerate}
  \item[(a)] $(\tu \times \tw ) \times \tw = (\tu \cdot \tw)\tw - |\tw|^2  \tu  $\\[-3mm]
  \item[(b)] $\tu \times (\Delta \tu \times \tu) = \Delta \tu + |\nabla \tu|^2 \tu$
 \end{enumerate}

\end{lemma}

\begin{proof}
 Part (a) immediately follows from $(a \times b) \times c = (a \cdot c) b - (b \cdot c) a $ for any $a,b,c \in \mathbb{R}^3$.
 In order to prove (b), we note that  $(\partial_{x_j} \tu )\cdot \tu=\frac 12 \partial_{x_j} |\tu|^2=0$ so that
 \begin{align*}
  \tu \times (\Delta \tu \times \tu) &= \sum_{j=1}^3 [ \partial_{x_j} ( \tu \times ( \partial_{x_j}\tu \times \tu)) - (\partial_{x_j} \tu) \times (\partial_{x_j} \tu \times \tu)]\\
  &= \sum_{j=1}^3 [ \partial_{x_j} ( |\tu|^2   \partial_{x_j}\tu  - (\partial_{x_j}\tu  \cdot \tu) \tu) - (\partial_{x_j} \tu \cdot \tu)\partial_{x_j}\tu +   |\partial_{x_j} \tu |^2 \tu]\\
  &= 
  \sum_{j=1}^3 [ \partial_{x_j} (    \partial_{x_j}\tu )+   |\partial_{x_j} \tu |^2 \tu].
 \end{align*}

\end{proof}

 \begin{theorem}\label{thm:stab}
 Let $u$ be a  finite energy weak solution of \eqref{wavemap}-\eqref{boundary} satisfying the local energy condition. Let $p=m$ for $m\geq 3$ and $p\in(2,\infty]$ for $m=2$. Consider $(\tu,\tw)$, a solution of the perturbed problem \eqref{perturbed}. We ask for the following regularity: $\tu \in L^\infty((0,\rT)\times \Omega,S^2)$ and
 \begin{align*}
  \nabla \tu, \tw\in & \Big( L^1((0,\rT); H^1(\Omega))   \cap L^\infty ((0,\rT); L^2 (\Omega)) \\
  &\qquad \cap L^k((0,\rT); L^\infty(\Omega))\cap L^{\frac{k}{k-1}}((0,\rT); L^p(\Omega))\Big)
 \end{align*}
 for a $k\in[1,2]$ and with given functions $r_w \in L^1((0,\rT); L^2( \Omega))$ and \\
 $r_u \in L^1((0,\rT);H^1( \Omega))\cap L^{\frac{k}{k-1}}((0,\rT); L^2(\Omega)).$
 Then, the difference at time $t$ can be controlled via the difference in initial data and perturbation terms. Indeed, $\cH$ defined by
  \[\cH(t):=  \frac 12 \int_\Omega |\partial_t u(t) - \tu(t) \times \tw(t) |^2 + |\nabla u(t) - \nabla \tu(t)|^2 +|u(t) -\tu(t)|^2\, dx,\]
  satisfies
 \begin{align}\label{eq:stab_loc}
  \sqrt{\cH(t)} \leq
  \left(   \sqrt{\cH(s)} +  \left( \int_s^t \alpha(\tau )  \, d\tau\right) \right)  \times \exp\left(\frac 12\int_s^t \delta(\tau)  \, d\tau \right)
  \end{align}
for almost all $s<t \in (0,\rT)$ with
\begin{align*}
 \alpha&:=  \| r_g   + r_u \times \tw + \tu \times r_w \|_{L^2(\Omega)} + \| r_u \|_{L^2(\Omega)} + \| \nabla r_u \|_{L^2(\Omega)} \\
  r_g &:= (\tu \cdot \tw)\tw - |\tu \cdot \tw|^2  \tu;\quad  \mathcal{A}[\tu]:= |\nabla \tu|^2 - | \tu \times \tw|^2\\
  \delta&:= \left( 1+  c_q \| \mathcal{A}[\tu]\|_{L^p( \Omega)} + 2c_q \|\tu \times \tw\|_{L^{2p}( \Omega)}
  ^2\right. \\ 
  &\qquad \ \left. + 2c_q \| \nabla \tu\|_{L^{2p}( \Omega)} \|\tu \times \tw \|_{L^{2p}( \Omega)} + 4 \|\tu \times \tw \|_{L^\infty( \Omega)}\right)
\end{align*}
where $c_q$ is the squared constant of the Sobolev embedding $H^1(\Omega) \rightarrow L^{\tfrac{2p}{p-2}}(\Omega)$.
This implies that for almost any sequence $0 =t_0 < t_1 < \dots < t_N \leq \rT$ 
\begin{multline}\label{eq:stab_glo}
  \sqrt{\cH(t_N)} \leq
    \sqrt{\cH(0)}\exp\left(\frac 12\int_0^{t_N} \delta(\tau)  \, d\tau \right) \\
    +  \sum_{j=1}^N\left( \int_{t_{j-1}}^{t_j} \alpha(\tau )  \, d\tau\right)  \times \exp\left(\frac 12 \int_{t_{j-1}}^{t_N} \delta(\tau)  \, d\tau \right)
  \end{multline}

\end{theorem}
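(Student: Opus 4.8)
The plan is to treat $\cH$ as a relative-energy functional and to derive a differential inequality of the form $\tfrac{d}{dt}\cH \le 2\alpha\sqrt{\cH}+\delta\,\cH$, from which \eqref{eq:stab_loc} follows by a Gronwall/Bihari argument and \eqref{eq:stab_glo} by telescoping. To keep everything rigorous for a merely finite-energy weak $u$, I would first split $\cH$ so that $u$ enters only through its energy and through two cross terms,
\[
\cH(t) = E[u(t),\pt u(t)] + \tfrac12\|\tu\times\tw\|_{L^2}^2 + \tfrac12\|\nabla\tu\|_{L^2}^2 - (\pt u,\tu\times\tw)_\Omega - (\nabla u,\nabla\tu)_\Omega + \tfrac12\|u-\tu\|_{L^2}^2 .
\]
The energy part is controlled directly by the local energy condition of Definition \ref{def:weak}, $E[u(t)]-E[u(s)]\le 0$; the purely strong part and the position term are differentiated using \eqref{perturbed}; and the velocity cross term is handled through the weak formulation (in the extended test-function class of Remark \ref{rem:weak}, whose hypotheses are met because the assumed regularity of $\nabla\tu$ and $\tw$ makes $\tu\times\tw$ admissible), applied on $[s,t]$ against the weakly time-continuous representative of $\pt u\in L^\infty(L^2)$. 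The point is that no pointwise time derivative of $u$ is ever taken.

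Write $e:=\pt u-\tu\times\tw$ and $g:=\nabla u-\nabla\tu$. The formal mechanism I make rigorous is as follows: differentiating $\tu\times\tw$ via \eqref{perturbed} and invoking Lemma \ref{2801}(a)--(b), together with $|\tw|^2=|\tu\times\tw|^2+(\tu\cdot\tw)^2$ and the definitions of $r_g$ and $\mathcal{A}[\tu]$, gives
\[
\pt(\tu\times\tw) = \Delta\tu + \mathcal{A}[\tu]\,\tu + r_g + r_u\times\tw + \tu\times r_w ,
\]
so $\tu\times\tw$ solves a wave-map-type equation with the same nonlinear structure as \eqref{wms}, up to the residuals. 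Pairing the weak formulation for $\pt u$ (whose nonlinearity is $\mathcal{A}[u]\,u=(|\nabla u|^2-|\pt u|^2)u$, parallel to $u$) against $\tu\times\tw$ and integrating by parts in space against the regular factor $\tu$, the $\Delta\tu$- and $\nabla u$-contributions cancel against the strong gradient energy in a continuous summation-by-parts, and $(\tu\times\tw,\mathcal{A}[\tu]\tu)_\Omega=0$ since $\tu\times\tw\perp\tu$. What survives are the residual pairings $-(e,r_g+r_u\times\tw+\tu\times r_w)_\Omega$, $-(\nabla r_u,g)_\Omega$ and $-(u-\tu,r_u)_\Omega$, which are bounded by $\alpha\sqrt{\cH}$ with exactly the three summands defining $\alpha$, plus the genuinely nonlinear term $(e,\mathcal{A}[u]u-\mathcal{A}[\tu]\tu)_\Omega$.

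The nonlinear term is where the main obstacle lies, since $\mathcal{A}[u]$ only lies in $L^1$ and must never appear unpaired; here the constraint $|u|=|\tu|=1$ is decisive. Splitting
\[
(e,\mathcal{A}[u]\,u-\mathcal{A}[\tu]\,\tu)_\Omega = (e,(\mathcal{A}[u]-\mathcal{A}[\tu])\,u)_\Omega + (e,\mathcal{A}[\tu]\,(u-\tu))_\Omega ,
\]
I would use $u\cdot\pt u=\tfrac12\pt|u|^2=0$ and $(\tu\times\tw)\cdot\tu=0$ to write $e\cdot u=-(\tu\times\tw)\cdot(u-\tu)$, and then expand $\mathcal{A}[u]-\mathcal{A}[\tu]=2g\cdot\nabla\tu+|g|^2-2e\cdot(\tu\times\tw)-|e|^2$; this rewrites the first summand entirely through $e$, $g$ and the regular quantities $\nabla\tu,\tu\times\tw$, so $\mathcal{A}[u]$ never appears alone. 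The cubic terms are controlled by the pointwise bound $|u-\tu|\le|u|+|\tu|=2$ (again the constraint), giving the $4\|\tu\times\tw\|_{L^\infty}$ contribution to $\delta$; the quadratic-in-$(e,g)$ terms and the second summand are estimated by H\"older together with the Sobolev embedding $H^1(\Omega)\hookrightarrow L^{\frac{2p}{p-2}}(\Omega)$ applied to $u-\tu$, whose squared constant is $c_q$. This produces precisely $c_q\|\mathcal{A}[\tu]\|_{L^p}$, $2c_q\|\tu\times\tw\|_{L^{2p}}^2$ and $2c_q\|\nabla\tu\|_{L^{2p}}\|\tu\times\tw\|_{L^{2p}}$, while the position term $\tfrac12\|u-\tu\|^2$ supplies the remaining summand $1$ in $\delta$.

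Collecting these bounds yields $\cH(t)-\cH(s)\le\int_s^t\big(2\alpha(\tau)\sqrt{\cH(\tau)}+\delta(\tau)\cH(\tau)\big)\,d\tau$ for almost every $s<t$. Setting $y=\sqrt{\cH}$ this reads $\tfrac{d}{dt}y\le\alpha+\tfrac12\delta\,y$, and multiplying by the integrating factor $\exp(-\tfrac12\int_s^\tau\delta)$, integrating, and bounding that exponential by $1$ inside the $\alpha$-integral gives \eqref{eq:stab_loc}. Finally, \eqref{eq:stab_glo} follows by applying \eqref{eq:stab_loc} on each subinterval $[t_{j-1},t_j]$, which produces the recursion $\sqrt{\cH(t_j)}\le e^{\frac12\int_{t_{j-1}}^{t_j}\delta}\big(\sqrt{\cH(t_{j-1})}+\int_{t_{j-1}}^{t_j}\alpha\big)$; unrolling it, the products of exponentials telescope to $\exp(\tfrac12\int_{t_{j-1}}^{t_N}\delta)$ and to $\exp(\tfrac12\int_{0}^{t_N}\delta)$ on the initial term. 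The ``almost any sequence'' caveat simply reflects that the $t_j$ must be chosen outside the null set on which the energy inequality or the weak-continuity of $\pt u$ fail.
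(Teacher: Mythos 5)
Your proposal follows essentially the same route as the paper's proof: the same decomposition of $\cH$ into the weak-solution energy (controlled by the local energy condition), the cross terms handled by testing the weak formulation with $\tu\times\tw$, the identity $\pt(\tu\times\tw)=\Delta\tu+\mathcal{A}[\tu]\tu+r_g+r_u\times\tw+\tu\times r_w$ from Lemma~\ref{2801}, the orthogonality-based rewriting of the nonlinear term in terms of $e$, $g$ and $u-\tu$, and the Gronwall plus telescoping conclusion. The only cosmetic difference is that the paper makes the ``no pointwise time derivative of $u$'' step explicit via the piecewise-linear cutoff $\phi_\eps$ and Lebesgue points, which your argument invokes only implicitly.
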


\begin{remark}[Constants in the estimate]\label{rem:sobcon}
Explicit upper bounds for $c_q$, for a variety of domains, can be found in \cite{MizuguchiTanakaSekineOishi2017}.
\end{remark}

\begin{remark}[Regularity of weak solution]\label{rem:regw}
 Note that if $u$ is a finite energy weak solution, not necessarily satisfying the local energy condition, then \eqref{eq:stab_loc} still holds in the special case $s^*=0$.
\end{remark}

\begin{remark}[Regularity of strong solution]\label{rem:regs}
  Note that our regularity assumptions with $k=1$ correspond to the conditions in Struwe's result \cite[Thm~2.2]{Struwe_1999}. The case studied in  \cite[Thm~2.2]{Struwe_1999} corresponds to $m=2$ and $r_u=r_\omega=r_g=0$ in our notation so that $D\tu \hat= (\tu \times \tw, \nabla \tu)$.
%
%
  Thus, choosing $k=1$, $p=2+\epsilon$, we assume $ \nabla \tu, \tw\in  \Big( L^1((0,\rT); H^1(\Omega)\cap L^\infty(\Omega))    
  \cap L^{\infty}((0,\rT); L^{2+\epsilon}(\Omega))\Big)=:X$.\\  
  Struwe assumes $D\tu\in X$ which also holds under our assumptions mainly because $\tu\in L^\infty( (0,\rT)\times\Omega)$. There is the difference that we consider Neumann boundary data and Struwe has a solution on full $\mathbb{R}^2$.\\[-0.2cm]
  
  Note furthermore that the conditions in our theorem allow us to use $\tu \times \tw$ as test function in $(2)$ of Definition~\ref{def:weak}, i.e.\ $\tu\times\tw \in W^{1,1}((0,\rT);L^2(\Omega))\cap L^1((0,\rT);L^\infty\cap H^1(\Omega))$. This can be seen as follows:
  First, we observe that $\tu\times \tw \in L^1((0,\rT);L^\infty(\Omega))$ and $\nabla(\tu\times\tw)\in L^1((0,\rT);L^2(\Omega)) $ using Youngs inequality $\int \|\nabla\tu\|_{L^p(\Omega)} \|\tw\|_{L^q(\Omega)} dt\leq c\int \|\nabla\tu\|_{L^p(\Omega)}^{\frac{k}{k-1}} + \|\tw\|^k_{L^q(\Omega)} dt<\infty$ ($\frac{1}{p} + \frac{1}{q}=\frac{1}{2}$). It remains to check $\partial_t(\tu\times\tw)\in L^1((0,\rT); L^2(\Omega))$:
   Using Lemma \ref{2801}, we know that
\begin{align}
\begin{split}\label{2801b}
 \partial_t (\tu \times \tw)&=  \partial_t \tu \times \tw +  \tu \times \partial_t \tw \\
 &= (\tu \times \tw) \times \tw + r_u \times \tw +  \tu \times (\Delta \tu \times \tu) + \tu \times r_w
 \\
 &= r_g + ( |\nabla \tu|^2 - |\tu \times \tw|^2) \tu  + r_u \times \tw + \Delta \tu+ \tu \times r_w,
 \end{split}
\end{align}
where we have used $|\tu \times \tw|^2 = |\tw |^2 - |\tw \cdot \tu|^2.$ Note that $r_g\sim |\tw|^2$. We always have that $\tw, \nabla\tu\in L^2((0,\rT);L^4(\Omega))$ because of
\begin{align*}
 \int \|\tw\|_{L^4}^2 dt \leq \int \|\tw\|_{L^\infty(\Omega)} \|\tw\|_{L^2} dt\leq \begin{cases}
        \text{esssup}_{t}\|\tw\|_{L^2(\Omega)} \int \|\tw\|_{L^\infty} dt <\infty &\text{ if } k=1\\
        c\int \|\tw\|_{L^\infty(\Omega)}^k + \|\tw\|_{L^2(\Omega)}^{\frac{k}{k-1}} dt <\infty & \text{ if } k>1.
       \end{cases}
\end{align*}
%
%
The conditions on $r_u$ imply that also $r_u\times\tw\in L^1((0,\rT);L^2(\Omega))$. Thus, all terms in (\ref{2801b}) are controlled in $L^1((0,\rT);L^2(\Omega))$.

\end{remark}

\begin{proof}
 Let us fix some (arbitrary) $s^*<t^* \in (0,\rT)$ and let us define for any $0 < \eps < \min\{\rT-t^*, t^* - s^*\}$ the map 
 \[ \phi_\eps (t):= \left\{ \begin{array}{ccc}
                           0 &:& t < s^*\\
                           \frac{t - s^*}{\varepsilon} &:&  s^* < t < s^* + \varepsilon\\
                           1 &: & s^* + \varepsilon \leq t\leq t^* \\ 
                             1 - \frac{t-t^*}{\eps}&: & t^* \leq t\leq t^* + \eps\\
                             0 &:&  t^* + \eps < t
                            \end{array}\right.
\]
We will study $\lim_{\eps \searrow 0 } \int_0^\rT \cH(t) \partial_t  \phi_\eps (t) \, dt $.
On the one hand, for every pair of Lebesgue points of $t \mapsto  E[u(t),\partial_t u(t)] $ we have
\begin{equation}\label{cH0}
\lim_{\eps \searrow 0 } \int_0^\rT \cH(t) \partial_t  \phi_\eps (t) \, dt  = - \lim_{\eps \searrow 0  } \frac{1}{\eps}  \left( \int_{t^*}^{t^*+\eps} \cH(t)   \, dt 
- \int_{s^*}^{s^*+\eps} \cH(t)   \, dt\right)= - \cH(t^*) + \cH(s^*).
\end{equation}
On the other hand, we may decompose the integral at hand as
\begin{align}\begin{split}
 \int_0^\rT \cH(t) \partial_t & \phi_\eps (t) \, dt  =
  \int_0^\rT E[u(t),\partial_t u(t)]  \partial_t  \phi_\eps (t) \, dt \\
  &\ - \int_0^\rT ( \partial_t u \cdot \tu \times \tw  + \nabla u \cdot \nabla \tu) \partial_t  \phi_\eps (t) \, dt \\
 &  \ + \int_0^\rT E[\tu(t),\tu(t)\times \tw(t)]  \partial_t  \phi_\eps (t) \, dt + \int_0^\rT  \frac 12 |u(t) -\tu(t)|^2  \partial_t  \phi_\eps (t) \, dt\\
  &=: E^1_\eps - E^2_\eps +E^3_\eps + E^4_\eps.
  \end{split}
\end{align}
Concerning $E^1_\eps$, we observe that for every pair of Lebesgue points of $t \mapsto  E[u(t),\partial_t u(t)] $ 
\begin{multline}\label{eq:e1e}
\lim_{\eps \searrow 0 } E^1_\eps = \lim_{\eps \searrow 0 } \int_0^\rT E[u(t),\partial_t u(t)] \partial_t  \phi_\eps (t) \, dt  \\
= -\lim_{\eps \searrow 0  }  \frac{1}{\eps}\left( \int_{t^*}^{t^*+\eps} E[u(t),\partial_t u(t)] \, dt - \int_{s^*}^{s^*+\eps} E[u(t),\partial_t u(t)] \, dt\right)\\=  - E[u(t^*),\partial_t u(t^*)]+ E[u(s^*),\partial_t u(s^*)] \geq 0.
\end{multline}
Next, we consider $E^2_\eps$ for fixed $\eps$ and observe
\begin{equation}\label{eq:e2e0}
 \begin{aligned}
  E^2_\eps &= ( \partial_t u, \tu \times \tw \partial_t \phi_\eps ) + (\nabla u, \nabla \tu \partial_t \phi_\eps)\\
    &= ( \partial_t u, \partial_t( \tu \times \tw  \phi_\eps) ) -( \partial_t u, \partial_t (\tu \times \tw) \phi_\eps )+ (\nabla u, \nabla \tu \partial_t \phi_\eps)\\
    &=  ( \nabla u, \nabla( \tu \times \tw  \phi_\eps) )  - ((|\nabla u|^2 - |\partial_t u|^2)u,\tu \times \tw  \phi_\eps) 
    \\
    & \quad -( \partial_t u, \partial_t (\tu \times \tw) \phi_\eps )+ (\nabla u, \nabla \tu \partial_t \phi_\eps).
 \end{aligned}
\end{equation}
We also note that
\begin{align}
 \begin{split}
\label{2801c}
 ( \nabla u, \nabla( \tu \times \tw  \phi_\eps) )+  (\nabla u, \nabla \tu \partial_t \phi_\eps)
 &= ( \nabla u, \nabla( \partial_t \tu \phi_\eps) )- ( \nabla u,\nabla r_u\phi_\eps)\\
 & \hspace{4cm}+  (\nabla u, \nabla \tu \partial_t \phi_\eps)\\
 &=( \nabla u, \nabla \partial_t( \tu \phi_\eps) )- ( \nabla u,\nabla r_u\phi_\eps).
 \end{split}
\end{align}
We insert \eqref{2801b} and  \eqref{2801c} into \eqref{eq:e2e0} and obtain 
\begin{equation}\label{eq:e2e}
 \begin{aligned}
  E^2_\eps 
   & =  ( \nabla u, \nabla \partial_t( \tu \phi_\eps) )- ( \nabla u,\nabla r_u\phi_\eps) - ((|\nabla u|^2 - |\partial_t u|^2)u,\tu \times \tw  \phi_\eps) \\
    & 
    \quad -( \partial_t u, (r_g  + ( |\nabla \tu|^2 - |\tu\times \tw|^2) \tu  + r_u \times \tw + \Delta \tu+ \tu \times r_w)\phi_\eps)\\
    &= 
    - ( \nabla u,\nabla r_u\phi_\eps) - ((|\nabla u|^2 - |\partial_t u|^2)u,\tu \times \tw  \phi_\eps) \\
    & \quad  
    -( \partial_t u, (r_g + ( |\nabla \tu|^2 - | \tu \times \tw|^2) \tu  + r_u \times \tw + \tu \times r_w)\phi_\eps)
 \end{aligned}
\end{equation}
where we have used integration by parts in the last equality.
Equation \eqref{eq:e2e} allows us to conclude
\begin{multline}\label{eq:e2f}
\lim_{\eps \searrow 0 } E^2_\eps = 
 - \int_{s^*}^{t^*}\! \int_\Omega \big\{ \partial_t u \cdot[ r_g  + ( |\nabla \tu|^2 - |\tu\times \tw|^2) \tu  + r_u \times \tw + \tu \times r_w]\\
 +  \nabla u\cdot \nabla r_u 
 + (|\nabla u|^2 - |\partial_t u|^2) u \cdot (\tu \times \tw) \big\}\, dx ds.
\end{multline}
Concerning $E^3_\eps$, we use \eqref{2801b} and integration by parts to obtain
\begin{equation}\label{eq:e3e}
 \begin{aligned}
  E^3_\eps &= \frac 12 ( \tu \times \tw , \tu \times \tw \partial_t \phi_\eps ) + \frac 12 (\nabla \tu, \nabla \tu \partial_t \phi_\eps)\\
    &= 
     \ -( \tu \times \tw \phi_\eps, r_g  + ( |\nabla \tu|^2 - |\tu \times \tw|^2) \tu  + r_u \times \tw + \Delta \tu+ \tu \times r_w ) + ( \partial_t \tu, \Delta \tu  \phi_\eps)\\
    &= 
    -( \tu \times \tw \phi_\eps, r_g    + r_u \times \tw + \tu \times r_w )+ ( r_u, \Delta \tu  \phi_\eps),
 \end{aligned}
\end{equation}
where we have used point-wise orthogonality of $\tu$ to $\tu \times \tw$ in the last equality.
Equation \eqref{eq:e3e} allows us to conclude
\begin{align}\begin{split}
\label{eq:e3f}
\lim_{\eps \searrow 0 } E^3_\eps =  
- \int_{s^*}^{t^*} \int_\Omega (\tu \times \tw)  \cdot[ r_g    + r_u \times \tw + \tu \times r_w ] + \nabla r_u \cdot \nabla \tu \, dx ds.
\end{split}
\end{align}
Finally, we find
\begin{equation}\label{eq:e4f} 
\lim_{\eps \searrow 0} E^4_\eps = 
- \int_{s^*}^{t^*} \int_\Omega (u - \tu) \cdot (\partial_t u - \tu \times \tw - r_u)\, dx dt.
\end{equation}
We combine \eqref{cH0}, \eqref{eq:e1e}, \eqref{eq:e2f}, \eqref{eq:e3f} and \eqref{eq:e4f} to obtain
\begin{align}\begin{split}\label{2801e}
 & \cH(t^*)- \cH(s^*)\\ \leq  & \int_{s^*}^{t^*} \hspace{-0.2cm}\int_\Omega   (  \tu \times \tw- \partial_t u)  \cdot[ r_g    + r_u \times \tw + \tu \times r_w ] - \nabla r_u \cdot (\nabla u - \nabla \tu) dx ds\\ 
  &+\int_{s^*}^{t^*}\hspace{-0.2cm} \int_\Omega (u-\tu)\cdot (\partial_t u-\tu\times\tw) - (u - \tu) \cdot r_u dx ds\\
  &- \int_{s^*}^{t^*}\hspace{-0.2cm} \int_\Omega  ( |\nabla \tu|^2 - |\tu\times \tw|^2) \tu \cdot\partial_t u  + (|\nabla u|^2 - |\partial_t u|^2) u \cdot \tu \times \tw\,  dx ds.
  \end{split}
\end{align}
Let us define $\mathcal{A}[u]:= (|\nabla u|^2 - |\partial_t u|^2)$ and (with a slight abuse of notation) $\mathcal{A}[\tu]:= (|\nabla \tu|^2 - |\tu\times \tw |^2)$. Then,  $u \perp \partial_t u$ and $\tu \perp \tu \times \tw$ allows us to infer
\begin{equation*}
 \partial_t u \cdot \mathcal{A}[\tu] \tu   +  (\tu \times \tw)   \cdot \mathcal{A}[u] u = - (\partial_t u - \tu \times \tw) \cdot \mathcal{A}[\tu] (u - \tu) + \tu \times \tw ( \mathcal{A}[u]-\mathcal{A}[\tu])(u-\tu).
\end{equation*}
This implies, for any pair $p,q >1$ such that $\tfrac 1p + \tfrac 1q = \tfrac 12$,
\begin{equation}\label{2801f}
 \begin{aligned}
  &\int_{s^*}^{t^*} \| \partial_t u \cdot \mathcal{A}[\tu] \tu   +  (\tu \times \tw)   \cdot \mathcal{A}[u] u \|_{L^1( \Omega)}  dt\\
 \leq & \int_{s^*}^{t^*} \|\partial_t u - \tu \times \tw \|_{L^2( \Omega)}\|\mathcal{A}[\tu] \|_{L^p( \Omega)}\|u - \tu\|_{L^q ( \Omega)}\\
 &\quad+ \|(\tu \times \tw) (\mathcal{A}[u] - \mathcal{A}[\tu] ) (u - \tu) \|_{L^1( \Omega)} \, dt\\
 \leq& \int_{s^*}^{t^*} \|\partial_t u - \tu \times \tw \|_{L^2( \Omega)}\Big(  \| \mathcal{A}[\tu]\|_{L^p( \Omega)} 
 + 2 \|\tu \times \tw\|_{L^{2p}( \Omega)}^2\Big) \|u - \tu\|_{L^q (\Omega)}\\
 & \quad + 2 \| \nabla \tu\|_{L^{2p}( \Omega)} \|\tu \times \tw \|_{L^{2p}( \Omega)}
 \|\nabla u - \nabla  \tu\|_{L^2 (\Omega)}\|u - \tu\|_{L^q ( \Omega)} \\ 
 &
 \quad + 2 \|\nabla u - \nabla  \tu\|_{L^2 ( \Omega)}^2  \|\tu \times \tw \|_{L^\infty( \Omega)}
 + 2 \|\partial_t u - \tu \times \tw \|_{L^2( \Omega)}^2  \|\tu \times \tw \|_{L^\infty( \Omega)}\, dt
 \end{aligned}
\end{equation}
where we have used that
\begin{multline} 
\mathcal{A}[u] - \mathcal{A}[\tu]  = 2 \nabla \tu \cdot (\nabla u - \nabla  \tu )   + |\nabla u - \nabla  \tu |^2  \\
- 2 (\tu \times \tw) \cdot (\partial_t u - \tu \times \tw )   - |\partial_t u -  \tu \times \tw |^2 
\end{multline}
and $u, \tu \in S^2$.
We insert  \eqref{2801f} into \eqref{2801e} and obtain, for those $q$ for which $H^1(\Omega)$ embeds into $L^q(\Omega)$
\begin{align}
 \begin{split}
\label{2801g}
  \cH(t^*) &\leq \cH(s^*) + \int_{s^*}^{t^*} 2 \sqrt{\cH(s)} \alpha(s)
  ds+ \int_{s^*}^{t^*}
 \delta(s)\cH(s)  ds\,.
 \end{split}
\end{align}
According to \cite[Thm~21]{Silvestru_2003} equation \eqref{2801g} implies \eqref{eq:stab_loc}. Equation \eqref{eq:stab_glo} follows by taking the square-root of \eqref{eq:stab_loc} and induction in $j$.

\end{proof}

We now come to the computations for a general closed target manifold $N$. Since for general $N$, we use one second order equation instead of a system of first order equations we have only one residual, but we split this into two parts, assuming that one part can be expressed as a time derivative.

\begin{theorem}\label{thm:stabN}
Let $u$ be a weak solution of the problem \eqref{wavemap}-\eqref{boundary} and $\tilde u=(\tilde u^1,...,\tilde u^\ell) :[0,\rT)\times\Omega \to N $ a strong solution of a perturbed problem 
$$\partial^2_t\tilde u - \Delta \tilde u = A[\tilde u](D\tilde u,D\tilde u) + R_1+ \pt R_2, \ \partial_n\tu=0.$$ 
Let $s=m$ for $m\geq 3$ und $s\in(2,\infty]$ for $m=2$. We assume that there is a $k\in[1,\frac{4}{3}]$ such that 
\begin{align*}
 \tu & \in W^{2, 1}((0,\rT) \times \Omega), \\ 
 D\tu & \in L^1((0,\rT);H^1(\Omega)) \cap L^\infty ((0,\rT); L^2(\Omega))\cap L^k ((0,\rT); L^\infty(\Omega)) \cap L^{\frac{k}{k-1}}((0,\rT);L^s(\Omega)),   \label{assumption}  \\
 R_1 & \in L^2\left((0,\rT)\times\Omega\right),\\ 
 R_2  &\in H^1\left((0,\rT)\times\Omega)\right)\cap L^1\left((0,\rT);L^\infty(\Omega)\right) \cap L^{\frac{k}{k-1}}(0,\rT);L^2(\Omega)). 
\end{align*}
We define the  energy $
 \cI[u(t,\cdot)]:= \frac{1}{2} \int_\Omega |u(t,\cdot)|^2 + |\partial_t u(t,\cdot)|^2 + |\nabla u(t,\cdot)|^2 dx.$ 
Then we have the inequality
\begin{align}\begin{split}
 \sup_{0 \leq s \leq t} \cI[u(s,\cdot)-\tu(s,\cdot)]\leq \gamma(t)\cdot \exp\left(\int_0^t \delta(s) ds\right)\end{split}\label{inequmfd},\ \ \ \text{ where}
 \end{align}
 \vspace{-1cm}
 \begin{align*}
  \gamma(t)&:= 8\cI[u-\tu]\big|_{t=0} + 4 \|R_2(0,\cdot)\|_{L^2(\Omega)}^2 + 2 \|R_2(t,\cdot)\|_{L^2(\Omega)}^2 + 
   4 \int_0^t  \|R_1 R_2\|_{L^1(\Omega)}ds  \\
   & \quad + 8\left(  \int_0^t \|\nabla R_2\|_{L^2(\Omega)} + 2c_1 \|D\tu\|_{L^\infty(\Omega)}\|R_2 \|_{L^2(\Omega)}\right. \\
   &\quad\qquad\qquad\qquad\qquad\qquad\qquad \left.+ c_1\,\sqrt{c_2} \|R_2 \|_{L^p(\Omega)} \|D\tu\|_{L^{s}(\Omega)}^2 + \|R_1\|_{L^2(\Omega)} ds\right)^2,\\
  \delta(t)& : = 4+ 4 c_1 \Big(3c_2 \|D\tu\|^2_{L^{2s}(\Omega)}  +  2 c_3\|\pt\tu\|_{L^\infty(\Omega)}   + 2  \|R_2 \|_{L^\infty(\Omega)} \Big).
 \end{align*}
%
The constant $c_3$ is the diameter of $N$, $c_3=\operatorname{diam}(N)$, $c_2$ is the squared constant from the Sobolev embedding $H^1 \hookrightarrow L^{p}$, $p:=\frac{2s}{s-2}$. The constant $c_1= c_1(N)$ is the maximum of the Lipschitz constant of $p\mapsto A[p](\cdot,\cdot)$ and $\sup_N |A|$.
\end{theorem}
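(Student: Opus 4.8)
The plan is to follow the architecture of the proof of Theorem~\ref{thm:stab}, trading the explicit $S^2$ cross‑product algebra for the intrinsic geometry of the second fundamental form. Writing $w:=u-\tu$, I would start from the algebraic identity
\[
\cI[u(t)-\tu(t)] = \tfrac12\int_\Omega|w|^2\,dx + E[u(t),\pt u(t)] + E[\tu(t),\pt\tu(t)] - \int_\Omega\big(\pt u\cdot\pt\tu + \nabla u\cdot\nabla\tu\big)\,dx,
\]
the exact analogue of the decomposition $\cH=E^1-E^2+E^3+E^4$ used before. As there, I would test against the piecewise‑linear temporal cut‑off $\phi_\eps$ and let $\eps\searrow0$, now with $s^*=0$. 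The contribution of $E[u]$ is then $-E[u(t^*),\pt u(t^*)]+E[u_0,u_1]\ge 0$ by the energy inequality built into Definition~\ref{def:weak} and may be discarded; the contribution of $E[\tu]$ is evaluated from the perturbed strong equation; the cross term is treated by inserting $\pt\tu\,\phi_\eps$ (admissible by the density argument of Remark~\ref{rem:weak}) into the weak formulation of $u$; and the term $\tfrac12\int|w|^2$ produces $\int w\cdot\pt w$, absorbed by Young's inequality.

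After these manipulations the only genuinely new quantity is the nonlinear cross term
\[
\int_\Omega \pt u\cdot A[\tu](D\tu,D\tu) + \pt\tu\cdot A[u](Du,Du)\,dx .
\]
Here I would exploit the two orthogonality relations $A[u](Du,Du)\perp T_uN\ni\pt u$ and $A[\tu](D\tu,D\tu)\perp T_\tu N\ni\pt\tu$, together with the Lipschitz continuity (with constant controlled by $c_1$) of both $p\mapsto A[p]$ and of the orthogonal projection $p\mapsto P_p$ onto $T_pN$. Using $\pt u=P_u\pt u$, $\pt\tu=P_{\tu}\pt\tu$ and $P_{\tu}A[\tu](D\tu,D\tu)=0=P_uA[u](Du,Du)$, each pairing is converted into one against $(P_u-P_{\tu})A[\,\cdot\,](\cdot,\cdot)$, which carries an explicit factor $|w|$; recombining the two contributions via $\pt u=\pt\tu+\pt w$ then cancels the leading $A[\tu](D\tu,D\tu)$ term. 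Expanding the remaining nonlinearities around $\tu$ by $Du=D\tu+Dw$, using bilinearity and symmetry of $A$ and the bound $|A[u]-A[\tu]|\le c_1|w|$, one checks that \emph{no bare factor $|Du|^2$ survives}: every summand carries at least one further difference ($w$ or $Dw$). The terms quadratic in the difference are estimated by H\"older's inequality and the Sobolev embedding $H^1\hookrightarrow L^p$, $p=\tfrac{2s}{s-2}$ (constant $c_2$), yielding the $c_2\|D\tu\|_{L^{2s}}^2$ contribution to $\delta$, while the cubic remainders, which contain one undifferentiated factor $w$, are reduced to quadratic ones by $|w|\le\operatorname{diam}(N)=c_3$ in $L^\infty$; this is precisely the origin of the $c_3\|\pt\tu\|_{L^\infty}$ contribution to $\delta$.

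Next I would track the residuals. The term $R_1\in L^2$ enters directly into the linear (i.e.\ $\sqrt{\cI}$-) forcing as $\|R_1\|_{L^2}$, and the product $\|R_1R_2\|_{L^1}$ appears once $R_2$ is incorporated. Because the perturbation enters the second‑order equation as $\pt R_2$, the natural momentum of the perturbed dynamics is $\pt\tu-R_2$; equivalently, integrating $\pt R_2$ by parts in time moves the derivative off $R_2$ and generates the boundary‑in‑time quantities $\|R_2(0,\cdot)\|_{L^2}^2$ and $\|R_2(t,\cdot)\|_{L^2}^2$ in $\gamma$ (after Young), together with the interior forcing terms $\|\nabla R_2\|_{L^2}$, $\|D\tu\|_{L^\infty}\|R_2\|_{L^2}$ and $\|R_2\|_{L^p}\|D\tu\|_{L^s}^2$, while $\|R_2\|_{L^\infty}$ enters $\delta$ through the same diameter argument applied to residual cubic terms. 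Collecting all contributions yields a nonlinear integral inequality $\cI[w(t^*)]\le a + \int_0^{t^*}\big(\beta(\sigma)\sqrt{\cI[w(\sigma)]}+\delta(\sigma)\cI[w(\sigma)]\big)\,d\sigma$, with $a$ and $\beta$ assembled from the initial data and the residuals.

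Applying the nonlinear Gronwall inequality \cite[Thm~21]{Silvestru_2003} to $\sqrt{\cI[w]}$ and squaring converts this into $\cI[w(t^*)]\le\gamma(t^*)\exp(\int_0^{t^*}\delta)$, the numerical factors $8,4,2$ in $\gamma$ arising from the elementary estimate $(\sqrt a+b)^2\le 2a+2b^2$ combined with the factors $\tfrac12$ in the definition of $\cI$; since the right‑hand side is monotone in $t^*$, taking the supremum over $t^*\le t$ gives \eqref{inequmfd}. I expect the main obstacle to be the geometric bookkeeping of the cross term: for a general submanifold the normal space is no longer spanned by the position vector (as it is for $S^2$), so reproducing the cancellations of Theorem~\ref{thm:stab} requires the Lipschitz estimates for $A$ and for the projections $P_p$, and one must verify carefully that after all cancellations every surviving term is at least quadratic in $w$, so that the single diameter bound closes the estimate, and that the limited regularity $D\tu\in L^k((0,\rT);L^\infty(\Omega))\cap L^{\frac{k}{k-1}}((0,\rT);L^s(\Omega))$ together with $\tu\in W^{2,1}$ is exactly enough to justify the temporal integrations by parts and the passage $\eps\searrow0$.
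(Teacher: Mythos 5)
Your proposal follows the same skeleton as the paper's proof: the decisive cancellation of the supercritical $|Du|^2$ pairings via the normality of $A[p](\cdot,\cdot)$ to $T_pN$, Lipschitz continuity of $p\mapsto A[p]$ together with the diameter bound $c_3$ to tame the cubic remainders, moving the time derivative off $\pt R_2$ to produce the endpoint terms $\|R_2(0,\cdot)\|_{L^2}^2,\ \|R_2(t,\cdot)\|_{L^2}^2$ and the forcing $\|\nabla R_2\|_{L^2}+\|D\tu\|_{L^\infty}\|R_2\|_{L^2}+\|R_2\|_{L^p}\|D\tu\|_{L^s}^2$, H\"older/Sobolev with $p=\tfrac{2s}{s-2}$, and a Gronwall closure. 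Two organisational differences are worth recording. First, the paper simply differentiates $\cI[u-\tu]$ pretending $u$ is strong and delegates rigour to the $\phi_\eps$ cutoff of Theorem~\ref{thm:stab}, whereas you spell out the relative-energy decomposition $\cI=\tfrac12\|w\|^2+E[u]+E[\tu]-(\pt u,\pt\tu)-(\nabla u,\nabla\tu)$ and discard $E[u]$ by the energy inequality; that is the rigorous version of the same computation. Second, and more substantively, the paper extracts the smallness in the nonlinear cross term by keeping the pairing against $\pt u-\pt\tu$ and applying $B(X,X)-B(Y,Y)=B(X-Y,(X-Y)+2Y)$ with $B=(A[u]-A[\tu])$, see \eqref{eq1}; this makes \emph{every} non-residual summand manifestly carry two difference factors (one $|w|$ from $A[u]-A[\tu]$ and one of $\pt w$ or $Dw$), which is exactly what is needed for a pure Gronwall term $\delta\,\cI[w]$. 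Your route through the Lipschitz continuity of the tangent projections $P_p$ produces, per pairing, only the single factor $|w|$ from $P_u-P_\tu$ (e.g.\ $\pt\tu\cdot(P_\tu-P_u)A[u](D\tu,D\tu)\sim|\pt\tu|\,|w|\,|D\tu|^2$ is merely linear in the difference and carries no residual, which would leave a non-vanishing forcing and destroy weak--strong uniqueness in the limit of zero residuals); so the subsequent ``recombination'' must be arranged exactly so that the second difference factor appears everywhere. You correctly identify this as the property to verify, but you assert rather than exhibit the arrangement; the paper's grouping in \eqref{eq1} is the clean way to achieve it and avoids projections altogether. Finally, you close with the nonlinear Gronwall lemma applied to $\sqrt{\cI}$ and square, while the paper absorbs $\int\gamma\sqrt{2\bar\cI}$ by Young's inequality into $2(\int\gamma)^2+\tfrac14\bar\cI$ and applies the linear Gronwall inequality to $\sup_{s\le t}\cI$; both yield the stated $\gamma,\delta$ up to the bookkeeping of the constants $8,4,2$.
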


\begin{proof}
We pretend that $u$ is a strong solution in order to keep the computations simpler. For a weak solution, we use the cutoff-trick with $\phi_\epsilon$, that was introduced in the proof of Theorem~\ref{thm:stab}. 
 Let us denote 
 \begin{align*}
\frac{d}{dt} \cI[u-\tilde u] = \frac{d}{dt}\left(\tfrac{1}{2}\hspace{-0.1cm}\int_\Omega |u-\tilde u|^2 + |\partial_t u -\partial_t\tu|^2 + |\nabla u-\nabla\tu|^2 dx\right)=: \frac{d}{dt} (I^1 + I^2 + I^3).
 \end{align*}
 Then we compute 
 \begin{align*}
  \frac{d}{dt} (I^2 + I^3) &= (\partial_{t}u-\partial_t\tilde u,\partial_{tt}u-\ptt\tu)_\Omega+ (\nabla u-\nabla\tu,\pt\nabla u-\pt\nabla\tu)_\Omega\\
  &= (\pt u-\pt\tu,\ptt u-\ptt\tu)_\Omega - (\Delta u-\Delta\tu,\pt u-\pt \tu)_\Omega\\
  &= (\pt u - \pt\tu, A[u](Du,Du)- A[\tu](D\tu,D\tu)-R_1 - \pt R_2)_\Omega.
 \end{align*}
As $A$ has normal values, we get that $\pt u \cdot A[u](\cdot,\cdot)=0= \pt \tu \cdot A[\tu](\cdot,\cdot)$. Using this orthogonality we get that
\begin{align}\begin{split}
  \frac{d}{dt} (I^2 + I^3) &= \left(\pt u, A[u](D\tu,D\tu)-A[\tu](D\tu,D\tu)\right)_\Omega - \left(\pt u- \pt\tu, R_1 + \pt R_2\right)_\Omega\\
  &\qquad\qquad - (\pt \tu, A[u](Du,Du)-A[\tu](Du,Du))_\Omega\\
  &= (\pt u-\pt\tu, (A[u]-A[\tu])(D\tu,D\tu))_\Omega  - (\pt u- \pt\tu, R_1 + \pt R_2)_\Omega\\
  &\qquad\qquad- (\pt \tu, (A[u]-A[\tu])(Du,Du)- (A[u]-A[\tu])(D\tu,D\tu))_\Omega.\end{split} \label{eq1}
\end{align}
 We consider the term $(\pt u-\pt\tu, \pt R_2)_\Omega$ that is contained in (\ref{eq1}) and compute
\begin{align}\begin{split}
 (\pt u-\pt\tu, \pt R_2)_\Omega & = \frac{d}{dt}(\pt u-\pt\tu,  R_2)_\Omega  - (\Delta u - \Delta\tu, R_2)_\Omega\\
 & \ \ -( A[u](Du,Du) - A[\tu](D\tu,D\tu)- R_1 - \pt R_2, R_2)_\Omega\\
 & = \frac{d}{dt}(\pt u-\pt\tu,  R_2)_\Omega + (\nabla u - \nabla \tu,\nabla R_2)_\Omega \\
 & \ \ - (A[u](Du,Du) - A[\tu](D\tu,D\tu), R_2)_\Omega + (R_1, R_2)_\Omega \\
 &\ \ + \frac{1}{2}\frac{d}{dt} ( R_2 , R_2 )_\Omega. \end{split}\label{eq2}
\end{align}
In order to estimate the term $(A[u](Du,Du) - A[\tu](D\tu,D\tu), R_2 )_\Omega$ we do the following:
\begin{align*}
 &\big( A[u](Du,Du) - A[\tu](D\tu,D\tu)\big)\cdot R_2 \\
 &= \big(A[u](Du,Du) - A[u](D\tu,D\tu) + (A[u]-A[\tu])(D\tu,D\tu)\big) \cdot R_2 \\
  &= \big(A[u](Du-D\tu,Du-D\tu) + 2A[u](Du-D\tu,D\tu) + (A[u]-A[\tu])(D\tu,D\tu)\big) \cdot R_2 
\end{align*}
where we used the equation $B(X,X)- B(Y,Y) = B\left(X-Y, (X-Y) + 2Y\right)$ for a symmetric bilinear form $B:\rR^n\times\rR^n\to \rR$. The map $p\mapsto A[p](\cdot,\cdot)$ is Lipschitz and $N$ is a compact smooth manifold. Therefore, there is a constant $c_1 = c_1(N)$ such that
\begin{align*}
 |\big(A[u](Du,Du) &- A[\tu](D\tu,D\tu)\big)\cdot R_2 |\\
 &\leq c_1 |R_2 |\left( |Du-D\tu|^2 + 2 |Du-D\tu||D\tu| + |u-\tu||D\tu|^2 \right)
\end{align*}
Using H\"older's  inequality and generalized H\"older's inequality we get that
\begin{align}\begin{split}
             & ((A[u](Du,Du) - A[\tu](D\tu,D\tu), R_2 )_\Omega\\
              &\qquad \leq 2 c_1 \|R_2 \|_{L^\infty(\Omega)} \cI[u-\tu] +  2 c_1 \|D\tu\|_{L^\infty(\Omega)}\|R_2 \|_{L^2(\Omega)}\sqrt{ 2\cI[u - \tu]} \\
              & \ \ \ \qquad + c_1 \|R_2 \|_{L^p(\Omega)} \|u-\tu\|_{L^p(\Omega)} \|D\tu\|_{L^{2q}(\Omega)}^2
             \end{split}\label{eq3}
\end{align}
for $\frac{2}{p} + \frac{1}{q}=1$. For $m\geq 3$, we use this inequality for $p=\frac{2m}{m-2}$ and get by Sobolev embedding $H^1(\Omega) \hookrightarrow L^{\frac{2m}{m-2}}(\Omega)$
\begin{align}
\begin{split} 
 & ((A[u](Du,Du) - A[\tu](D\tu,D\tu), R_2 )_\Omega\\
              &\qquad \leq 2 c_1 \|R_2 \|_{L^\infty(\Omega)} \cI[u-\tu] +  2 c_1 \|D\tu\|_{L^\infty(\Omega)}\|R_2 \|_{L^2(\Omega)}\sqrt{ 2\cI[u - \tu]} \\
              & \ \ \ \qquad + c_1 \, \sqrt{c_2} \|R_2 \|_{L^p(\Omega)} \|D\tu\|_{L^{2q}(\Omega)}^2 \sqrt{ 2\cI[u - \tu]},
\end{split}\label{eq3b}
\end{align}
where $q = \frac{m}{2}$ and $c_2$ is the squared constant from the Sobolev embedding. Thus, in the case $m\geq 3$, we get for $s=m$ the inequality
\begin{align}
\begin{split} 
 & ((A[u](Du,Du) - A[\tu](D\tu,D\tu), R_2 )_\Omega\\
              &\qquad \leq 2 c_1 \|R_2 \|_{L^\infty(\Omega)} \cI[u-\tu] \\
              &\qquad \ \ \ +  c_1\left( 2\|D\tu\|_{L^\infty(\Omega)}\|R_2 \|_{L^2(\Omega)} + \sqrt{c_2} \|R_2 \|_{L^p(\Omega)} \|D\tu\|_{L^{s}(\Omega)}^2\right)\sqrt{ 2\cI[u - \tu]}.
\end{split}\label{eq3c}
\end{align}
For $m=2$, we can choose $1\leq p<\infty$ arbitrarily large in (\ref{eq3}), which implies (\ref{eq3c}) holds for $2<s$ arbitrary, in particular for $s$ close to two. 
\\
We put (\ref{eq2}) and (\ref{eq3c}) together and get that
\begin{align}\begin{split}
              &-(\pt u-\pt\tu , \pt R_2 )_\Omega  \leq 
              -\frac{d}{dt}(\pt u-\pt\tu,  R_2)_\Omega + 2 c_1 \|R_2 \|_{L^\infty(\Omega)} \cI[u-\tu] \\
 & \ \  + \left(\|\nabla R_2 \|_{L^2(\Omega)}+ 2c_1 \|D\tu\|_{L^\infty(\Omega)}\|R_2 \|_{L^2(\Omega)} + c_1\,\sqrt{c_2} \|R_2 \|_{L^p(\Omega)} \|D\tu\|_{L^{s}(\Omega)}^2\right)\sqrt{2 \cI[u - \tu]}  \\
 &\ \ + \|R_1 R_2\|_{L^1(\Omega)}- \frac{1}{2}\frac{d}{dt} ( R_2 , R_2 )_\Omega
             \end{split}\label{eq4}
\end{align}
Coming back to (\ref{eq1}), we again use the formula \\
$B(X,X)- B(Y,Y) = B\left(X-Y, (X-Y) + 2Y\right)$ for the terms with the second fundamental form there. Choosing any $\frac{1}{s} + \frac{1}{\tilde s}= \frac{1}{2}$ yields
\begin{align*}
(\pt u-\pt\tu, &  (A[u]-A[\tu])(D\tu,D\tu))_\Omega  - (\pt \tu, (A[u]-A[\tu])(Du,Du)\\
&\qquad\qquad\qquad\qquad\qquad   - (A[u]-A[\tu])(D\tu,D\tu))_\Omega\\
  &\leq c_1\|\pt u-\pt\tu\|_{L^2(\Omega)}\|u-\tu\|_{L^{\tilde s}(\Omega)}\|D\tu\|^2_{L^{2s}(\Omega)}\\
 &\qquad  + c_1 \|\pt\tu\|_{L^\infty(\Omega)} \|u-\tu\|_{L^\infty(\Omega)}\|Du-D\tu\|^2_{L^2(\Omega)} \\
 &\qquad  + 2c_1 \|(|\pt\tu||D\tu|)\|_{L^s(\Omega)} \|u-\tu\|_{L^{\tilde s}(\Omega)}\|Du-D\tu\|_{L^2(\Omega)}.
\end{align*}
We define $c_3 = c_3(N) = \operatorname{diam}(N)$ and $c_2$ to be the squared Sobolev embedding constant from $H^1 \hookrightarrow L^{\tilde s}$. We also use $|\pt \tu||D\tu|\leq |D\tu|^2$ and get that
\begin{align}\begin{split}
 \frac{d}{dt}& (I^2 + I^3) \leq c_1 \Big( c_2\|D\tu\|^2_{L^{2s}(\Omega)}  + 2c_3 \|\pt\tu\|_{L^\infty(\Omega)} \\
 &\qquad\qquad\qquad+ 2 c_2\|D\tu\|_{L^{2s}(\Omega)}^2  + 2  \|R_2 \|_{L^\infty(\Omega)} \Big) \cI[u-\tu]\\
 &\qquad\qquad \qquad-   \frac{d}{dt}(\pt u-\pt\tu,  R_2)_\Omega  + \|R_1 R_2\|_{L^1(\Omega)}  - \frac{1}{2}\frac{d}{dt} ( R_2 , R_2 )_\Omega
 \\ &\qquad\qquad \qquad + \left(\|\nabla R_2 \|_{L^2(\Omega)}+ 2c_1 \|D\tu\|_{L^\infty(\Omega)}\|R_2 \|_{L^2(\Omega)} \right.\\
 &\qquad\qquad\qquad \left.+ c_1\,\sqrt{c_2} \|R_2 \|_{L^p(\Omega)} \|D\tu\|_{L^{s}(\Omega)}^2 + \|R_1\|_{L^2(\Omega)}\right)\sqrt{2\cI[u-\tu]}.
 \end{split} \label{inequfast}
\end{align}
Together with $\frac{d}{dt}I^1 = (u-\tu,\pt u-\pt \tu)_\Omega \leq \cI[u-\tu]$, (\ref{inequfast}) implies
\begin{align}\begin{split}
 \cI[&u(t,\cdot)-\tu(t,\cdot)] \leq \cI[u-\tu]\big|_{t=0} -(\pt u-\pt\tu, R_2 )_\Omega\big|_t + (\pt u(0)-\pt\tu (0),R_2(0,\cdot))_\Omega \\
 &\qquad -\frac{1}{2}\|R_2(t,\cdot)\|_{L^2(\Omega)}^2  +\frac{1}{2}\|R_2(0,\cdot)\|_{L^2(\Omega)}^2\\
 &  \qquad+ \int_0^t \alpha(s) \cI[u(s,\cdot)-\tu(s,\cdot)] ds + \int_0^t\beta(s)ds  + \int_0^t\gamma(s)\sqrt{2 \cI[u(s,\cdot)-\tu(s,\cdot)]}ds  ,\end{split}\label{eq5}
\end{align}
where
\begin{align*}
 \alpha(t)& : = 1+ c_1 \Big(3c_2 \|D\tu\|^2_{L^{2s}(\Omega)}  + 2c_3 \|\pt\tu\|_{L^\infty(\Omega)}   + 2  \|R_2 \|_{L^\infty(\Omega)} \Big) \\
 \beta(t)&:= \|R_1 R_2\|_{L^1(\Omega)}\\
 \gamma(t)&:= \|\nabla R_2 \|_{L^2(\Omega)}+ 2c_1 \|D\tu\|_{L^\infty(\Omega)}\|R_2 \|_{L^2(\Omega)} + c_1\, \sqrt{c_2} \|R_2 \|_{L^p(\Omega)} \|D\tu\|_{L^{s}(\Omega)}^2 +  \|R_1\|_{L^2(\Omega)}.
\end{align*}
From inequality (\ref{eq5}), we get that
\begin{align}\begin{split}
 \frac{1}{2}& \cI[u(t,\cdot)-\tu(t,\cdot)] \leq 2\, \cI[u-\tu]\big|_{t=0}   
 +\|R_2(0,\cdot)\|_{L^2(\Omega)}^2+ \frac{1}{2}\|R_2(t,\cdot)\|_{L^2(\Omega)}^2\\
 &  + \int_0^t \alpha(s) \cI[u(s,\cdot)-\tu(s,\cdot)] ds+ \int_0^t\beta(s)ds
 + \int_0^t\gamma(s)\sqrt{2\cI[u(s,\cdot)-\tu(s,\cdot)]}ds.
 \end{split}\label{eq6}
\end{align}
We set 
\[ \bar \cI[u-\tu](t):= \sup_{0 \leq s \leq t} \cI[u(t,\cdot)-\tu(t,\cdot)]\]
and obtain, as the right hand side of \eqref{eq6} is monotone increasing in time
\begin{align*}
 \frac{1}{2}& \bar \cI[u(t,\cdot)-\tu(t,\cdot)] \leq  2\, \cI[u-\tu]\big|_{t=0}   
 +\|R_2(0,\cdot)\|_{L^2(\Omega)}^2 +\frac{1}{2}\|R_2(t,\cdot)\|_{L^2(\Omega)}^2\\
 &  \qquad+ \int_0^t \alpha(s) \bar \cI[u(s,\cdot)-\tu(s,\cdot)] ds+ \int_0^t\beta(s)ds
 + \int_0^t\gamma(s)ds \sqrt{2\bar \cI[u(t,\cdot)-\tu(t,\cdot)]}.
\end{align*}so that
\begin{align*}
 \frac{1}{4} \bar \cI[u(t,\cdot)-\tu(t,\cdot)] &\leq  2\, \cI[u-\tu]\big|_{t=0}   
 +\|R_2(0,\cdot)\|_{L^2(\Omega)}^2+ \frac{1}{2}\|R_2(t,\cdot)\|_{L^2(\Omega)}^2\\
 &  + \int_0^t \alpha(s) \bar \cI[u(s,\cdot)-\tu(s,\cdot)] ds+ \int_0^t\beta(s)ds
 + 2\left(\int_0^t\gamma(s)ds\right)^2.
\end{align*}
Gronwall's inequality now implies (\ref{inequmfd}).\\[-0.2cm]

It remains to explain that the conditions on $\tu$ are strong enough for the above computations. We use $\partial_t\tu$ as a test function in the weak formulation for $u$. This is allowed because the assumptions on $\tu$ imply $\partial_t \tu\in W^{1,1}((0,\rT);L^2(\Omega))\cap L^1((0,\rT);L^\infty\cap H^1(\Omega)) $.\\
The conditions on $\tu$ imply that all appearing terms are finite. Note particularly that 
\begin{align*}
 &\int \|R_2\|_{L^2(\Omega)} \|D\tu\|_{L^\infty(\Omega)} dt \leq  c \int \|R_2\|_{L^2(\Omega)}^{\frac{k}{k-1}}  + \|D\tu\|_{L^\infty(\Omega)}^k dt <\infty \ \ \ \text{ and}\\
 &\int \|D\tu\|_{L^{2s}(\Omega)}^2 dt\leq \int \|D\tu\|_{L^{\infty}(\Omega)}\|D\tu\|_{L^{s}(\Omega)} dt \leq \int c\|D\tu\|_{L^{\infty}(\Omega)}^k + \|D\tu\|_{L^{s}(\Omega)}^{\frac{k}{k-1}} dt <\infty,\\
 & \int  \|R_2 \|_{L^p(\Omega)} \|D\tu\|_{L^s}^2 dt \leq c \int  \|R_2\|_{H^1}^2  + \|D\tu\|_{L^s}^4 dt,
\end{align*}
and the latter is finite because $k\leq \frac{4}{3}$ implies $\frac{k}{k-1} \geq 4$.
\end{proof}

 \begin{remark}\label{rem:stabdiff}
  As one can see, the estimate for general $N$ is slightly different than the one for spheres. The reason is that there is no such formulation as the \emph{angular momentum formulation} (\ref{wmom}) for general $N$. The formulation (\ref{wmom}) has the advantage that it transforms the wave map equation into a system of equations that are only first order in $t$. Our numerical scheme is based on that formulation. Our reconstruction will not be $W^{2,1}$ but only  $W^{1,\infty}$, a time derivative is a weak derivative. Furthermore, in the formulation (\ref{wmom}) the term $u\times\omega$ is the time derivative $\pt u$, which makes sense for $u$ and $\omega$ being only continuous. This is also the reason that the two energies $\cH$ and $\cI$ differ in the time derivative.

  Our motivation for splitting the residual in Theorem \ref{thm:stabN} is that if $\tu$ in Theorem \ref{thm:stab} happens to be twice weakly differentiable in time, then it satisfies
  \[
   \partial_t^2 \tu = \Delta \tu + ( |\nabla \tu|^2 - |\tu \times \tw|^2) \tu + \partial_t r_u + r_g   + r_u \times \tw 
   + \tu \times r_w,
  \]
  where we we have used \eqref{2801b}.
  This equation provides a connection between $R_1$, $R_2$  and $r_u$, $r_g$ and $r_\omega$.
  We formulated the estimates in Theorem~\ref{thm:stab} and Theorem~\ref{thm:stabN} as similar as possible -- in particular there is no norm of a time derivative of the residuals involved on the right hand sides of the inequalities -- but in detail the estimates look a little bit different.
  \end{remark}
  
  \begin{remark}\label{rem:stabdiff1B}
  We note that our regularity assumptions for general $N$ are only silghtly stronger than the one for spheres in Theorem~\ref{thm:stab}. On the one hand, we need a second time derivative of $\tu$, see the remark above. On the other hand, we need the additional assumption $D\tu\in L^{\frac{k}{k-1}}((0,\rT);L^s(\Omega))$ where $\frac{k}{k-1}\geq 4$ (and not $k \leq 2$ as in the sphere case). The reason is the appearance of the term $\int_0^t \| D\tu\|^2_{L^s(\Omega)} \|R_2\|_{H^1(\Omega)}ds$ in inequality (\ref{inequmfd}). Due to the different structure of the proof, this term did not appear in Theorem~\ref{thm:stab}. \\[-0.2cm]

  Note furthermore that by choosing $k=1$ and considering the case $R_1=R_2=0$ we have the same regularity assumptions as Struwe has in his result \cite[Thm~2.2]{Struwe_1999}, see Remark~\ref{rem:regs}.
 \end{remark}

 \begin{remark}\label{rem:stabdiff2}
  In the next chapter, we derive an a posteriori error estimate based on the stability framework of Theorem \ref{thm:stab} and not on Theorem \ref{thm:stabN}. 
 Since an \textit{angular momentum formulation} is not available for general target manifolds,
  basing our analysis on Theorem \ref{thm:stab}, requires us to restrict our analysis to numerical methods for wave maps with values in $S^2$.
  The reason why we use this, less general, stability analysis is its ability to handle reconstructions $\tu, \tw$ that are in $W^{1, \infty}$ in time. In contrast, a posteriori results based on \ref{thm:stabN} would require some reconstruction of the numerical solution that is in $W^{2,1}$ in time. Such a reconstruction was derived for the linear wave equation in \cite{Georgoulis_Lakkis_Makridakis_2013} but it is not clear how to extend this construction to the wave map case.
 \end{remark}

 \section{Numerical scheme and a posteriori error estimates}
 We are concerned with a semi-discretization in time devised for wave maps into spheres in \cite{Karper_Weber_2014}.
 The scheme is based on the reformulation \eqref{wmom} of the wave-map equation 
 and reads
 \begin{equation}\label{scheme} 
d_t u^{k+1} = u^{k+1/2} \times \omega^{k+1/2} , \quad 
d_t \omega^{k+1} = \Delta u^{k+1/2} \times u^{k+1/2},
 \end{equation}
 where $d_t$ is a backward difference quotient in time for a step size $\tau_k$ and the fractional superscript denotes an average in time, i.e.
 \[ d_t u^{k+1} := \frac 1 {\tau_k} (u^{k+1} - u^k) , \quad 
  u^{k+1/2}:= \frac{u^{k+1} + u^k}{2}.
 \]
 Note that \eqref{scheme} preserves the point-wise constraints $|u^k(x)|=1$ and $u^k \perp \omega^k$ for all $k$ provided they are satisfied for the initial data. Indeed, this follows by multiplying \eqref{scheme} by $u^{k+1/2}$.
 
A fully discrete version of this scheme, using a finite element discretization in space, was investigated in
 \cite{Bartels_2015}. There, it was shown that the scheme conserves energy, in the sense that
 \[
  \frac  12  \int_\Omega |\omega^k|^2 + |\nabla u^k|^2 dx = \frac  12  \int_\Omega |\omega^0|^2 + |\nabla u^0|^2 dx \qquad \forall k=0,\dots, N
 \]
and that in the limit of vanishing  time step size subsequences of the numerical solution converge to weak solutions of the wave map problem.
 In contrast to the results of Bartels, we are interested in a posteriori error estimates, i.e.,  computable error bounds that can be evaluated once the numerical solution has been computed. 
 
 While weak solutions can be defined beyond times at which a gradient blow-up has occurred there does not seem to be uniqueness beyond these times and,  the results of \cite{DAncona_Georgiev_2004} show that numerical schemes cannot be expected to converge with respect to the energy norm, once the exact solution has no additional regularity. As a consequence, we are only able to provide useful estimates up to the blow up time.
 Beyond apparent gradient blow-up in the solution the error bounds keep converging in $\tau$ but blow up 
 for $h \rightarrow 0$.

 Our error analysis follows the approach outlined by Makridakis \cite{Makridakis_2007} in that it combines the 'energy type' stability result derived in Theorem \ref{thm:stab} with a suitable reconstruction of the numerical solution.

 The scheme \eqref{scheme} is very close to a Crank-Nicolson scheme and consequently we apply a reconstruction that is close to the reconstruction proposed in \cite{Akrivis_Makridakis_Nochetto_2006}.
 A specific feature is that in order to employ the stability result from Theorem \ref{thm:stab} we need to project the reconstruction into the target manifold. 
 
\subsection{Reconstruction}
In the sequel, we will define suitable reconstructions of the numerical solutions assuming that a sequence of numerical approximations at different points in time  $0=t_0 < t_1 < .... < t_N$ is given:
\[ \{ u^n\}_{n=0}^N : \Omega \rightarrow  S^2, \{ \omega^n\}_{n=0}^N : \Omega \rightarrow \rR^3.\]
Firstly, we define preliminary, globally continuous, and piecewise linear interpolants
using local Lagrange polynomials
\[ \ell_n^0(t):=\frac{t_{n+1}-t}{t_{n+1}-t_n}, \qquad \ell_n^1(t): = \frac{t-t_n}{t_{n+1}-t_n}\]
by
\begin{equation*}
 \widehat u |_{[t_n, t_{n+1}]}(t) := \ell_n^0(t)u^n + \ell_n^1(t)u^{n+1}, \qquad
  \widehat \omega |_{[t_n, t_{n+1}]}(t): = \ell_n^0(t) \omega^n +\ell_n^1(t)\omega^{n+1} .
\end{equation*}
%
In addition, for any $g\in C^0 ([0,t_N], L^2 (\Omega, \rR^3))$ we define  piecewise constant and piecewise linear interpolants by
\begin{align*}
I_1 [g] |_{[t_n, t_{n+1}]}(t) & := g(t_n) + \frac{t-t_n}{t_{n+1}-t_n} (g(t_{n+1}) - g(t_n))\\
I_0[g]|_{(t_n, t_{n+1})}(t) & := \frac 12 ( g(t_n) + g(t_{n+1}) ).
\end{align*}
%
This allows us to rewrite the  numerical scheme as
\begin{equation}\label{eq:nama}
 \begin{aligned}
  \partial_t \widehat u|_{(t_n, t_{n+1})}  = I_0[ \widehat u \times \widehat \omega]  - \frac{u^{n+1} - u^n}{2} \times \frac{\omega^{n+1} - \omega^n}{2}  =:  I_0[ \widehat u \times \widehat \omega]  - a_u^n\,, \\
   \partial_t \widehat \omega|_{(t_n, t_{n+1})}  = I_0[\Delta \widehat u \times \widehat u] - \frac{\Delta u^{n+1} - \Delta  u^n}{2} \times \frac{u^{n+1} - u^n}{2} =:  I_0[ \Delta \widehat u \times \widehat u]  - a_\omega^n \,.
 \end{aligned}
\end{equation}
Next, we define piecewise quadratic reconstructions via
\begin{equation}\label{recon2}
  \begin{aligned}
   u^*|_{(t_n, t_{n+1})}(t)  & := u^n + \int_{t_n}^{t} I_1[ \widehat u \times \widehat \omega]    - a_u^n\, ds, \\
   \tw|_{(t_n, t_{n+1})} (t) & := \omega^n  + \int_{t_n}^{t} I_1[ \Delta \widehat u \times \widehat u]  - a_\omega^n\, ds.
 \end{aligned}
\end{equation}
Note that $u^*, \tw$ are globally continuous in time since the trapezoidal formula is exact for linear functions and, in particular, $u^*(t_n)= u^n$ and, thus, $|u^*(t_n,x)|= |u^n(x)|=1$ for all $n$ and all $x \in \Omega$.
Finally, we define 
\begin{equation}\label{recon3}
  \tu := \frac{u^*}{|u^*|},
\end{equation}
such that $\tu$ is a map into $S^2$ and $\tu(t_n)=u^n$ for $n=0,\dots,N$.\\
A straightforward computation gives
\begin{equation}\label{def:res}
 \begin{aligned}
  \partial_t \tu&= \tu \times \tw + r_u \quad \text{with } r_u := \tu \times \tw - I_1 (\tu \times \tw) + a_u + \partial_t u^* - \frac{\partial_t u^*}{|u^*|} + \frac{ \partial_t u^* \cdot u^*}{|u^*|^3}u^*,\\
  \partial_t \tw& = \Delta \tu \times \tu + r_\omega \quad \text{with } r_\omega := \Delta \tu \times \tu - I_1 [\Delta \tu \times \tu] + a_\omega,
 \end{aligned}
\end{equation}
where $a_u|_{(t_n,t_{n+1})}:=a_u^n$ and $a_\omega|_{(t_n,t_{n+1})}:=a_\omega^n$.

\subsection{Computable bounds for residuals}
 It should be noted that, due to the projection onto the sphere, $\tu$ is no longer piecewise quadratic.
 Thus, it is not straightforward how to compute (norms of) the residuals from \eqref{def:res}. In addition, the method at hand is formally  second order, so that we should strive for a reconstruction making the a posteriori error estimator second order as well.
 It might seem obvious that $\tu \times \tw - I_1 (\tu \times \tw) + a_u $ and $\Delta \tu \times \tu - I_1 [\Delta \tu \times \tu] + a_\omega$ are second order in time,
 but this property does not seem obvious for $\partial_t u^* - \frac{\partial_t u^*}{|u^*|} + \frac{ \partial_t u^* \cdot u^*}{|u^*|^3}u^*$.
 
 Let us begin by decomposing the residuals into several parts:
 \begin{equation}
  \begin{aligned}
   r_{u,1}&=  \tu \times \tw - I_1 (\tu \times \tw), \\
   r_{u,2}&=  a_u, \\
   r_{u,3}&=  \partial_t u^* - \frac{\partial_t u^*}{|u^*|} + \frac{ \partial_t u^* \cdot u^*}{|u^*|^3}u^*,\\
   r_{\omega,1}&=\Delta \tu \times \tu - I_1 [\Delta \tu \times \tu] ,\\
   r_{\omega,2}&= a_\omega.
  \end{aligned}
 \end{equation}
Let us note that $r_{u,2},r_{\omega,2}$ are both computable from the numerical solution, without computing any reconstruction, and are both going to converge to zero as $\tau^2$ as long as the scheme is at least second order convergent.

We will also give easily computable (though lengthy) bounds for the other parts of the residuals but we  postpone their proof to the appendix.
We will state the bounds for a representative time interval $(t_n, t_{n+1})$ and use the following abbreviations:
\begin{equation*}
 \begin{aligned}
  A^u &:= |u^{n+1} - u^n| , \quad A^u_x :=  |\nabla (u^{n+1} - u^n)|, \quad  A^u_{xx} :=  |\Delta (u^{n+1} - u^n)|,\\
  A^\omega &:= |w^{n+1} - w^n| , \quad A^\omega_x :=  |\nabla (w^{n+1} - w^n)| \\
   B^u &:= |u^{n+1}\times w^{n+1} - u^n\times w^n| , \quad B^u_x :=  |\nabla (u^{n+1}\times w^{n+1} - u^n\times w^n)|, \\
   B^u_{xx} &:=  |\Delta (u^{n+1}\times w^{n+1} - u^n\times w^n)|\\
  B^\omega &:= | \Delta u^{n+1}\times u^{n+1} - \Delta u^n\times u^n| , \quad B^\omega_x := |\nabla( \Delta u^{n+1}\times u^{n+1} - \Delta \tu^n\times \tu^n)|, \\
  C^\omega &:= \max\{ |\omega^n|, |\omega^{n+1}|\} ,  C^u_x := \max\{ |\nabla u^n|, |\nabla u^{n+1}|\} ,\\
  C^\omega_x &:= \max\{ |\nabla \omega^n|, |\nabla \omega^{n+1}|\} , \quad  C^u_{xx}:= \max\{ |\Delta u^n|, |\Delta u^{n+1}|\}.
 \end{aligned}
\end{equation*}
Let us mention that we expect $A^u,A^\omega, A^u_x,A^\omega_x, A^u_{xx}, B^u,B^\omega, B^u_x,B^\omega_x, B^u_{xx}$ to scale like $\tau_n:=t_{n+1} - t_n$ as long as the exact solution is regular enough for the true error to be proportional to $\tau_n^2$.
In addition, we expect $C^\omega, C^u_x,C^\omega_x, C^u_{xx}$ to be bounded (uniformly in $\tau_n$) as long as the exact solution is regular. Both expectations are confirmed by our numerical experiments.

\begin{lemma}\label{lem:compbounds}
Let us denote $\tau_n := t_{n+1} - t_n$ and let the time-step be chosen sufficiently small such that $(A^u)^2 + \tau_n B^u < \frac 14$ holds point-wise.
Then, the residuals defined in \eqref{def:res} satisfy the following point-wise estimates.
 \begin{align}\label{bound:ru1}
 &\Big| r_{u,1}|_{(t_n,t_{n+1})}\Big| \leq \tau_n B^\omega  + C^\omega (A^u)^2  + C^\omega \tau_n B^u  + \frac 14 A^u A^\omega\\
\label{bound:nru1}
 &\Big| \nabla r_{u,1}|_{(t_n,t_{n+1})} \Big| \leq (C_x^u + \tau_n B^u_x)(\tau_n B^\omega + C^\omega (A^u)^2)  + \tau_n B^\omega_x \\
 &\qquad\qquad\qquad + C^\omega ( A^u_x A^u + \tau_n B_x^u + C^u_x \tau_n B^u + \tau_n^2 B^u B^u_x) \nonumber\\
  &\qquad\qquad\qquad+ (A^u)^2 C^\omega_x +\tau_n B^u_x C^\omega + \tau_nB^u C^\omega_x + A^u_x A^\omega + A^u A^\omega_x \nonumber\\
 \label{bound:ru2}
 &\Big| r_{u,2}|_{(t_n,t_{n+1})}\Big| \leq \frac 14 A^u A^\omega\\
 \label{bound:nru2}
 &\Big|\nabla  r_{u,2}|_{(t_n,t_{n+1})}\Big| \leq \frac 14 (A^u_x A^\omega + A^u A^\omega_x)\\
\label{bound:ru3}
& \Big| r_{u,3}|_{(t_n,t_{n+1})}\Big| \leq (C^\omega + \frac 14 A^u A^\omega)(  \frac 43 (A^u)^2 +\frac 83 \tau_n B^u) \\
& \nonumber \qquad\qquad\qquad\qquad  + 4 A^u A^\omega (2 + \tau_n B^u) +4 C^\omega  \tau_n B^u
\label{bound:nru3}
\end{align}
 \begin{align}
 &\Big|\nabla r_{u,3}|_{(t_n,t_{n+1})}\Big| \leq 8 C^\omega [ A^u A^u_x + \tau_n B^u_x + C^u_x \tau_n B^u + \tau_n^2 C^u C^u_x]\\
 &\nonumber\qquad\qquad+ (C^\omega_x + C^u_x C^\omega )[(A^u)^2 + 3\tau_n B^u]\\
 &\nonumber\qquad\qquad+C^u_x A^u A^\omega + A^u A^\omega_x + \frac 14 (A^u_x A^\omega + A^u A^\omega_x) \\
 &\nonumber\qquad\qquad + \frac 14 A^u A^\omega (C^u_x + \tau_n B^u_x) + (C^\omega_x + C^u_x C^\omega)\tau_n B^u + C^\omega \tau_n B^u_x\\
 \label{bound:ro}
  &\Big| r_{\omega}|_{(t_n,t_{n+1})}\Big| \leq  ( C^u_{xx}+ \tau_n B^u_{xx}) \left( \frac 73 (A^u)^2+ \frac {11}3 \tau_n B^u\right)+\frac 94 A^u_{xx} A^u + (A^u_x)^2\\
&\nonumber\qquad\qquad+(C^u_x + \tau_n B^u_x)2[ A^u_x A^u + \tau_n B^u_x + (1 +C^u_x) \tau_n B^u + \tau_n^2 B^u_x B^u]\\
\label{bound:rg}
 &| r_g | \leq (C^\omega + \tau_n B^\omega)) [\tau_n B^\omega + C^\omega( (A^u)^2 + \tau_n B^u) + A^u A^\omega] \\
 &\nonumber\qquad\qquad + [\tau_n B^\omega + C^\omega( (A^u)^2 + \tau_n B^u) + A^u A^\omega]^2.
\end{align}
\end{lemma}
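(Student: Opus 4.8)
The plan is to estimate the five pieces $r_{u,1},r_{u,2},r_{u,3},r_\omega,r_g$ separately on a fixed slab $(t_n,t_{n+1})$, after assembling a short toolkit of pointwise bounds. Throughout write $\lambda=\lambda(t):=(t-t_n)/\tau_n\in[0,1]$ and $v:=u^*-u^n$. The computable piece $r_{u,2}=a_u^n$ is bounded by inspection, $|a_u^n|=\tfrac14|u^{n+1}-u^n|\,|\omega^{n+1}-\omega^n|\le\tfrac14A^uA^\omega$, which is \eqref{bound:ru2}; \eqref{bound:nru2} follows by one spatial product rule. For the toolkit I record the velocity and acceleration bounds coming from \eqref{recon2}: since $\pt u^*=I_1[\widehat u\times\widehat\omega]-a_u^n$ and $|u^k|=1$, one has $|\pt u^*|\le C^\omega+\tfrac14A^uA^\omega$, while $\ptt u^*=\tau_n^{-1}(u^{n+1}\times\omega^{n+1}-u^n\times\omega^n)$ is constant in $t$ with $|\ptt u^*|\le B^u/\tau_n$, and likewise $|\ptt\tw|\le B^\omega/\tau_n$. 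Because $u^*$ and $\tw$ are $t$-quadratic and interpolate the nodal values, the exact remainder identity $g-I_1[g]=\tfrac12\,\ptt g\,(t-t_n)(t-t_{n+1})$ yields $|u^*-\widehat u|\le\tfrac18\tau_nB^u$, $|\tw-\widehat\omega|\le\tfrac18\tau_nB^\omega$, and hence $|\tw|\le C^\omega+\tau_nB^\omega$.

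The key toolkit item is the near--unit-length estimate. Since $|u^n|=1$, $|u^*|^2-1=2\,u^n\cdot v+|v|^2$, the nonlinear part being harmless, $|v|^2\le(A^u+\tfrac18\tau_nB^u)^2$. For the linear part the crucial observation is the purely algebraic identity $u^n\cdot I_1[\widehat u\times\widehat\omega](s)=\lambda(s)\,u^n\cdot(u^{n+1}\times\omega^{n+1}-u^n\times\omega^n)$, obtained from $u^n\cdot(u^n\times\omega^n)=0$: its right-hand side is bounded by $\lambda(s)\,B^u$. Integrating in $s$ and subtracting the $a_u^n$-term gives $|u^n\cdot v|\le\tfrac12\tau_nB^u+\tfrac14\tau_nA^uA^\omega$, so that $\bigl||u^*|^2-1\bigr|$ is of order $(A^u)^2+\tau_nB^u$; the smallness hypothesis $(A^u)^2+\tau_nB^u<\tfrac14$ keeps $|u^*|>\tfrac12$ and converts this into the length bound $\bigl||u^*|-1\bigr|=|u^*-\tu|\le\tfrac43(A^u)^2+\tfrac83\tau_nB^u$ used throughout. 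Its spatial gradient is controlled the same way: $\nabla(|u^*|^2-1)=\lambda^2\tau_n\nabla P+O(\tau^2)$ with $P:=u^n\cdot(u^{n+1}\times\omega^{n+1}-u^n\times\omega^n)$ and $|\nabla P|\le C^u_xB^u+B^u_x$, which stays second order precisely because the explicit factor $\tau_n$ is retained.

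With the toolkit in hand the interpolation residuals are handled by the split $g-I_1[g]=(g-\widehat g)+(\widehat g-I_1[\widehat g])$, with $\widehat g:=\widehat u\times\widehat\omega$ for $r_{u,1}$ and $\widehat g:=\Delta\widehat u\times\widehat u$ for $r_\omega$, using $I_1[g]=I_1[\widehat g]$ as both agree at the nodes. The second bracket is an exact quadratic remainder of a $t$-quadratic, giving the $\tfrac14A^uA^\omega$ term of \eqref{bound:ru1} (resp.\ an $A^u_{xx}A^u$ term of \eqref{bound:ro}); the first bracket is expanded by bilinearity, $g-\widehat g=(\tu-\widehat u)\times\tw+\widehat u\times(\tw-\widehat\omega)$ for $r_{u,1}$, and bounded by the toolkit ($|\tu-\widehat u|\le|u^*-\tu|+|u^*-\widehat u|$, $|\widehat u|\le1$, $|\tw|\le C^\omega+\tau_nB^\omega$), producing the $\tau_nB^\omega$, $C^\omega(A^u)^2$ and $C^\omega\tau_nB^u$ contributions. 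For $r_\omega$ the analogous expansion $\Delta\tu\times\tu-\Delta\widehat u\times\widehat u=(\Delta\tu-\Delta\widehat u)\times\tu+\Delta\widehat u\times(\tu-\widehat u)$ additionally requires the Laplacian of the projection $\Delta\tu-\Delta\widehat u$, whence the $C^u_{xx}$, $B^u_{xx}$, $(A^u_x)^2$ and $C^u_x,B^u_x$ terms of \eqref{bound:ro}. The estimates \eqref{bound:nru1}, \eqref{bound:nru2} follow by differentiating these identities term by term and re-running the same bookkeeping with the $C^u_x,C^\omega_x,C^u_{xx},B^u_x,B^\omega_x,B^u_{xx}$ quantities. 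The geometric residual $r_g=(\tu\cdot\tw)\bigl(\tw-(\tu\cdot\tw)\tu\bigr)$ is bounded by $|\tu\cdot\tw|\,(|\tw|+|\tu\cdot\tw|)$, so it remains to estimate $|\tu\cdot\tw|$; since $\tu\cdot\tw$ vanishes at $t_n$ (as $u^n\cdot\omega^n=0$) one integrates $\tfrac{d}{dt}(\tu\cdot\tw)=r_u\cdot\tw+\tu\cdot r_\omega$ (which follows from \eqref{def:res} together with $(\tu\times\tw)\cdot\tw=0=\tu\cdot(\Delta\tu\times\tu)$), or equivalently expands $\tfrac{d}{dt}(u^*\cdot\tw)$ and uses that the node-constant parts $(u^n\times\omega^n)\cdot\omega^n$ and $u^n\cdot(\Delta u^n\times u^n)$ vanish, obtaining the bracket $\tau_nB^\omega+C^\omega((A^u)^2+\tau_nB^u)+A^uA^\omega$ of \eqref{bound:rg}.

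The one genuinely delicate estimate -- flagged as non-obvious in the text before the lemma -- is the projection residual, which after differentiating $\tu=u^*/|u^*|$ reads $r_{u,3}=\pt u^*\bigl(1-|u^*|^{-1}\bigr)+|u^*|^{-3}(\pt u^*\cdot u^*)\,u^*$. Factoring out $|\pt u^*|\le C^\omega+\tfrac14A^uA^\omega$, its first summand is immediately of order $(A^u)^2+\tau_nB^u$ by the length bound, giving the leading term of \eqref{bound:ru3}. The obstacle is the normal component $\pt u^*\cdot u^*=\tfrac12\pt|u^*|^2$: although $|u^*|^2-1$ is second order and vanishes at both endpoints, its \emph{time} derivative is only first order in general (a profile $c\,\lambda(1-\lambda)$ with $c\sim\tau^2$ has $\pt(\,\cdot\,)=\tfrac1{\tau_n}c(1-2\lambda)\sim\tau$), so one can neither differentiate the length estimate nor bound $\pt u^*\cdot u^n$ and $\pt u^*\cdot v$ separately. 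Second order is restored only by a cancellation between these two pieces that uses the \emph{scheme relation} $u^{n+1}-u^n=\tau_n\,u^{n+1/2}\times\omega^{n+1/2}$ -- i.e.\ the increment aligns with $u^n\times\omega^n$ to leading order, which drives the $\lambda^2$-coefficient $\tau_n(P+\tau_n|\omega^n|^2)$ of $|u^*|^2-1$ down to third order -- and \emph{not} merely the preserved norm $|u^k|=1$; this is the sole place where the full scheme, rather than cross-product geometry and the nodal constraints $|u^n|=1$, $u^n\perp\omega^n$, is needed. Quantifying this cancellation in terms of $A^uA^\omega$ and $C^\omega\tau_nB^u$, and then propagating it through one spatial derivative for \eqref{bound:nru3} -- where $\nabla\tu$, $\nabla|u^*|=|u^*|^{-1}(u^*\cdot\nabla u^*)$ and $\pt\nabla|u^*|^2=2\lambda\nabla P+\dots$ must all be re-estimated and the gradient of the same cancellation invoked -- is the main obstacle; the remaining pieces, once the toolkit is available, reduce to mechanical product-rule bookkeeping.
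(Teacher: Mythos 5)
Your overall architecture matches the paper's: the same toolkit (quadratic-minus-linear interpolation remainders, the near-unit-length bounds for $\widehat u$ and $u^*$, the splitting $g-I_1[g]=(g-\widehat g)+(\widehat g-I_1[\widehat g])$ using that the nodal values agree), the same decompositions for $r_{u,1}$, $r_\omega$ and $r_g$, and the correct identification of the normal component $\partial_t u^*\cdot u^*$ as the crux of $r_{u,3}$. However, at exactly that crux your argument stops: you describe quantifying the cancellation as ``the main obstacle'' rather than carrying it out, so the bounds \eqref{bound:ru3} and \eqref{bound:nru3} — the ones the paper itself flags as non-obvious — are not actually established. That is a genuine gap, not a routine omission.

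Moreover, the mechanism you propose for closing it is not the right one. You assert that second order ``is restored only by'' the scheme relation $u^{n+1}-u^n=\tau_n\,u^{n+1/2}\times\omega^{n+1/2}$ and ``not merely'' by cross-product geometry and the nodal constraints. The paper's proof shows the opposite: writing $\partial_t u^*\cdot u^* = (I_1[\widehat u\times\widehat\omega]-a_u^n)\cdot(\widehat u+(u^*-\widehat u))$, the only dangerous pairing is $I_1[\widehat u\times\widehat\omega]\cdot\widehat u$, and the identity
\begin{equation*}
 I_1[\widehat u\times\widehat\omega]\cdot\widehat u=\ell_n^0(t)\,\ell_n^1(t)\,\det\bigl(u^{n+1}-u^n,\;u^n,\;\omega^{n+1}-\omega^n\bigr)
\end{equation*}
follows purely from $u^k\cdot(u^k\times\omega^k)=0$ (automatic for cross products) and multilinearity of the determinant; it is bounded by $A^uA^\omega$ for \emph{arbitrary} nodal data with $|u^k|=1$, with no reference to the update rule. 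This matters beyond aesthetics: the lemma is stated for the residuals of the reconstruction built from whatever $\{u^n,\omega^n\}$ one has (in practice the nonlinear system is only solved approximately), so a proof that genuinely required the exact scheme relation would prove a weaker statement. The same algebraic device (nodal orthogonality $\partial_{x_j}u^n\perp u^n$ feeding a product of two increments) is what makes $\nabla(1-1/|u^*|)$ and $\Delta|u^*|$ second order in the paper's treatment of \eqref{bound:nru3} and \eqref{bound:ro}; your sketch gestures at the gradient case but does not supply the Laplacian identity needed for $|\Delta\tu-\Delta u^*|$ in \eqref{bound:ro}. The remaining parts of your proposal ($r_{u,1}$, $r_{u,2}$, $r_g$, and the length toolkit) are essentially the paper's argument, up to harmless reshufflings and constants.
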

An a posteriori estimate for the difference between the exact solution and the reconstructions $(\tu,\tw)$ is easily obtained by combining Lemma \ref{lem:compbounds} and Theorem \ref{thm:stab}.
It should be noted that the definitions of the residuals and the way they enter into the error estimate is rather similar to \cite[Theorem 3.1]{Georgoulis_Lakkis_Makridakis_Virtanen_2016}. 
However, we cannot avoid exponential-in-time growth of the error due to the non-linearity of the problem, even if the solution does not exhibit gradient blow-up.

\section{Numerical experiments}
In this section, we perform numerical experiments in order to demonstrate the
scaling behaviour of the error estimator and to investigate time stepping strategies.
We focus on the scaling (in $\tau$) of the error and of the error estimator.
In order to obtain a fully practical scheme, we  used a finite difference discretisation in space 
(together with the temporal discretisation discussed above), i.e.\ the scheme from \cite{Karper_Weber_2014}.
Whenever derivatives occur in the error estimator, they are approximated using finite differences.
We consider the problem that was also studied in \cite{Bartels_2009}. 
These are smooth initial data for which a singularity forms in the numerical experiments. We believe that this reflects formation of a singularity in the exact solution, but, strictly speaking, we cannot be sure that this is what happens since the error estimator stops converging for $h \rightarrow 0$ after a singularity seems to have formed.
The fact that the error estimator does not converge once a singularity (seems to) have formed is 
related to the fact that our error estimator is an upper bound for the difference between the numerical solution and \textit{any} finite energy weak solution in the sense of Definition 3.1 and these solutions are expected not to be unique once a singularity has formed. Note that this is not a contradiction to convergence of numerical solutions to some weak solution. In order to obtain an error estimator that converges after singularity formation one would need some technique to compare the numerical solutions to only one weak solution. It is currently unclear how to achieve this.


%

 The problem data are given by $N=S^2$, $\Omega = (-\frac 12, \frac 12)^2$ and
\begin{equation}
 u(0,x) = \left\{ \begin{array}{ccc}
                \frac{(2 a(x) x_1, 2 a (x) x_2 , a(x) - |x|)^T}{a(x)^2 + |x|^2} & \text{for } & |x| \leq 1/2 \\
                (0,0,-1)^T  & \text{for } & |x| \geq 1/2
               \end{array}
\right. \qquad \partial_t u(0,\cdot) \equiv 0
\end{equation}
with $a(x):=(1-2|x|)^4$ and homogeneous Neumann boundary conditions.

The numerical solution develops a singularity at some time  $t>.2$ to be precise. Thus, we expect the scheme to converge with order $\tau^2$ in the energy norm
\[ \max_{t \in [0,\rT]} \left( \int_\Omega | \partial_t u - \tilde u \times \tilde \omega|^2 +
|\nabla u - \nabla \tilde u|^2  dx \right)^{\frac 12}\]
at least until $\rT=.2$.
\newcommand{\figscale}{1.0}
\newcommand{\figwidth}{\textwidth}



\begin{figure}[ht!]
\caption[]
          {
            \label{fig:snapshots} 
            Snapshots of numerical solution using $h=1/60$ and $\tau=2^{-10}$ at different times. Arrows depict first two components of $u_h$, color indicates direction of third component. }
          \begin{center}
            \begin{subfigure}[b]{0.24\linewidth}
              \includegraphics[width=\linewidth]
                              {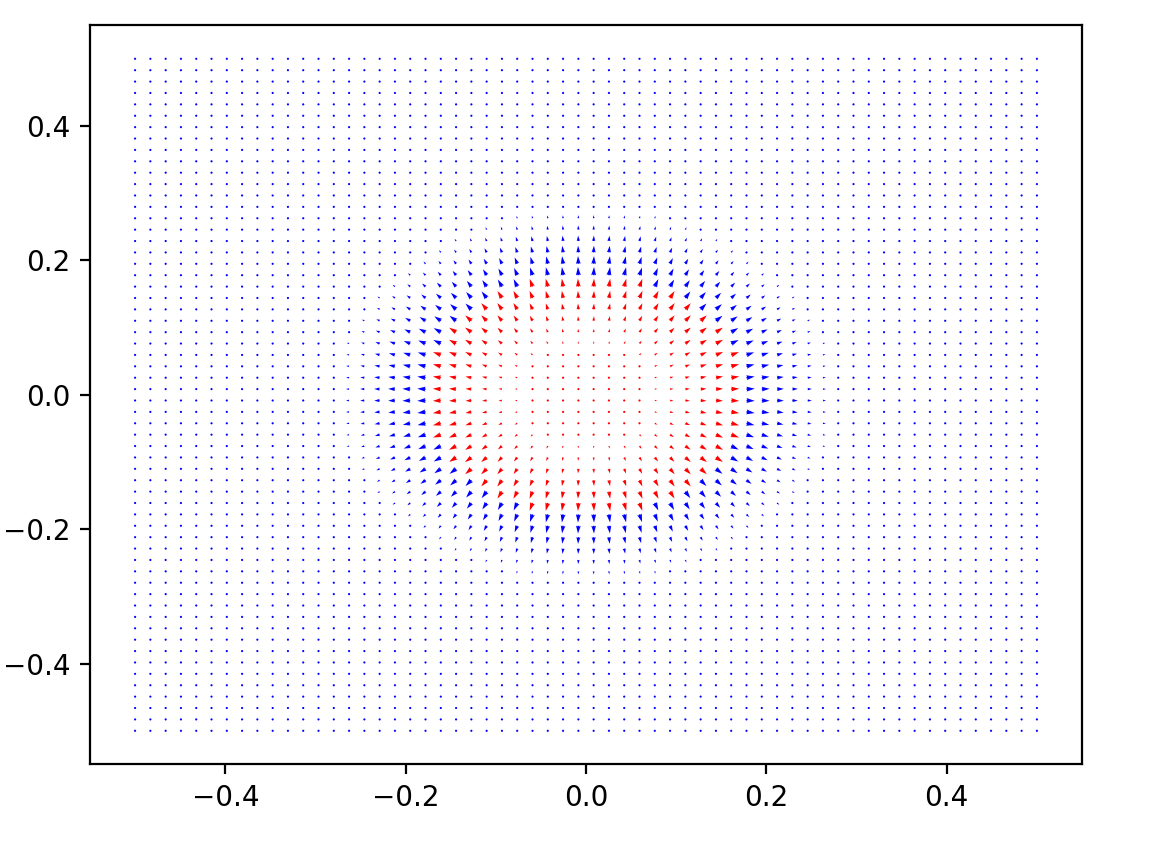}
              \caption{$t=0$}
            \end{subfigure}
            \begin{subfigure}[b]{0.24\linewidth}
              \includegraphics[width=\linewidth]
                              {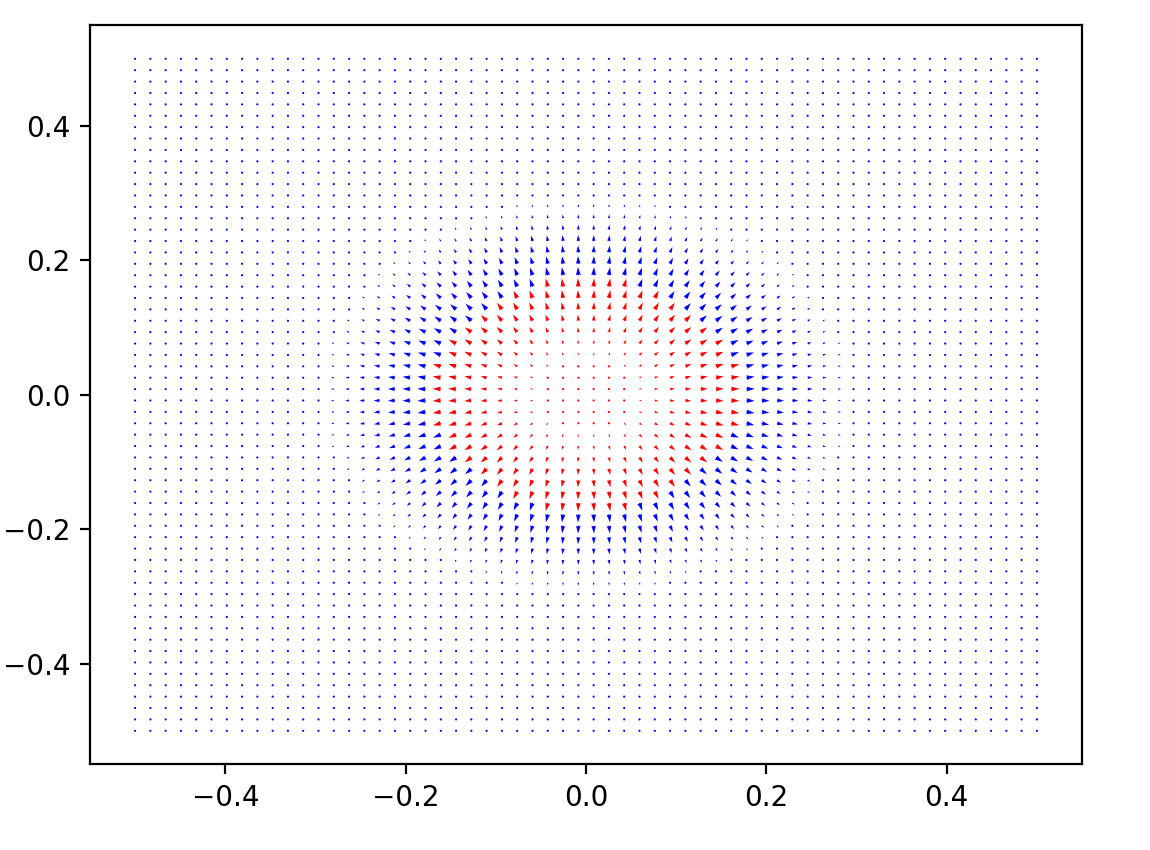}
              \caption{$t=0.049$}
            \end{subfigure}
            \begin{subfigure}[b]{0.24\linewidth}
              \includegraphics[width=\linewidth]
                              {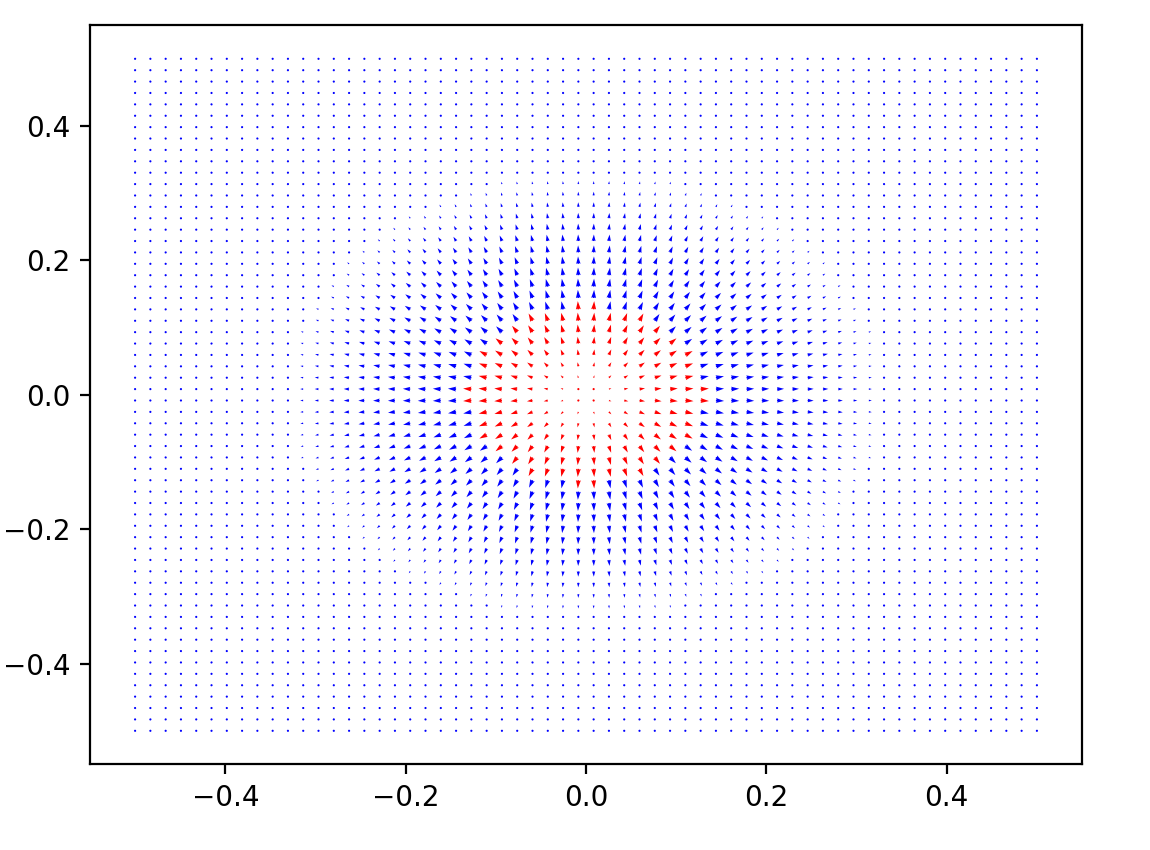}
              \caption{$t=0.098$}
            \end{subfigure}
            \begin{subfigure}[b]{0.24\linewidth}
              \includegraphics[width=\linewidth]
                              {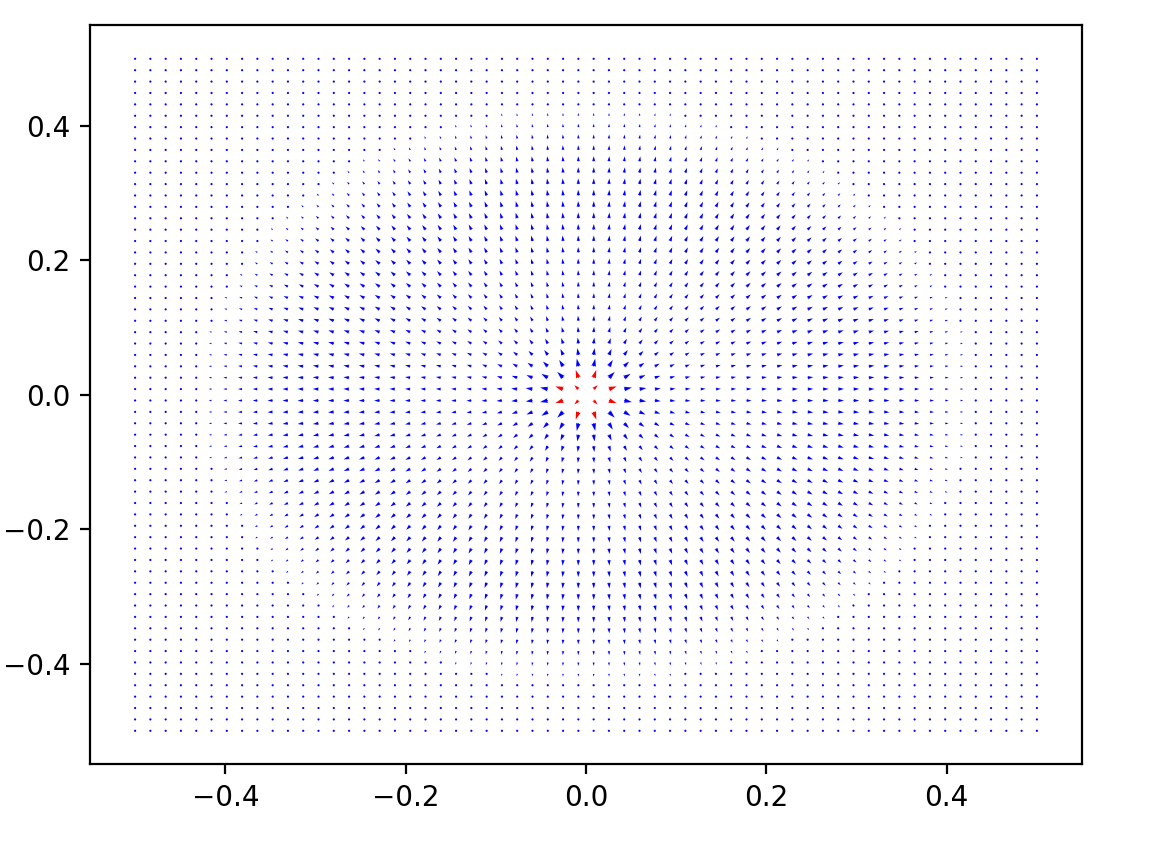}
              \caption{$t=0.0205$}
            \end{subfigure}

            \begin{subfigure}[b]{0.24\linewidth}
              \includegraphics[width=\linewidth]
                              {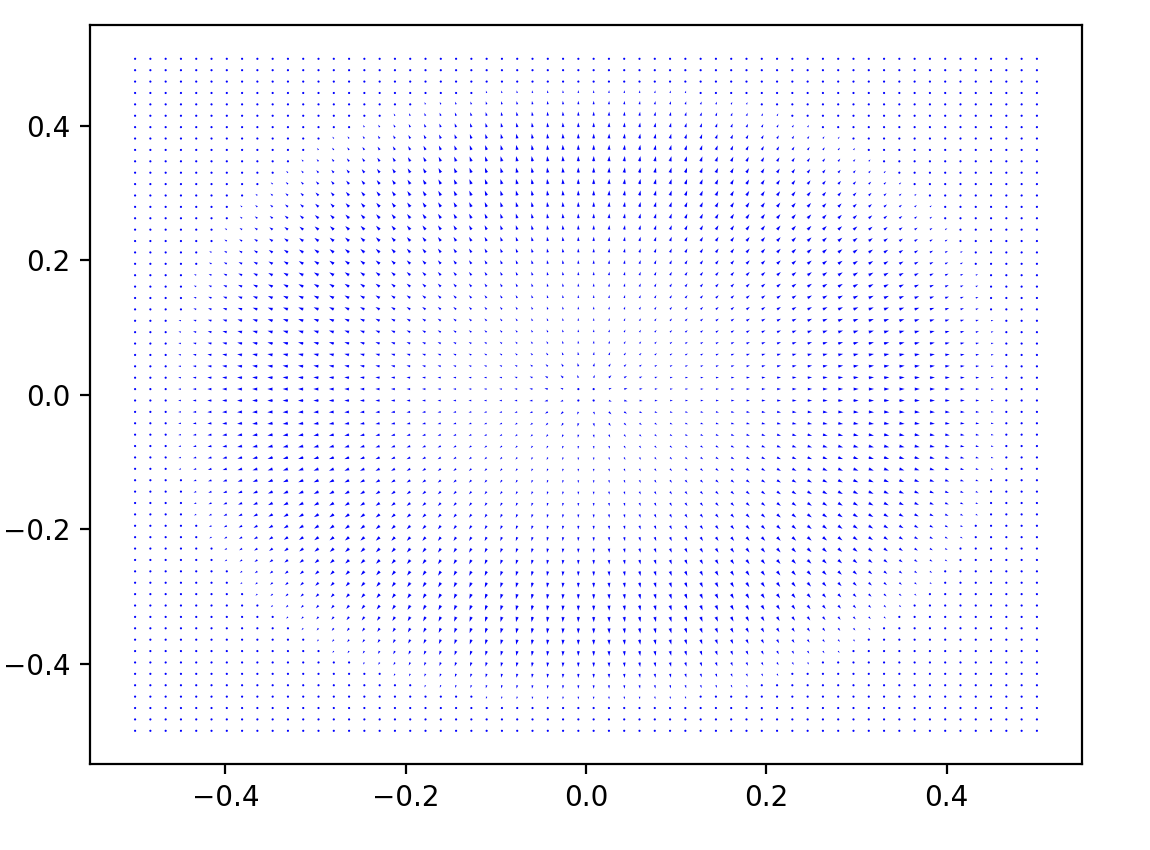}
              \caption{$t=0.254$}
            \end{subfigure}
            \begin{subfigure}[b]{0.24\linewidth}
              \includegraphics[width=\linewidth]
                              {t=0049.png}
              \caption{$t=0.049$}
            \end{subfigure}
            \begin{subfigure}[b]{0.24\linewidth}
              \includegraphics[width=\linewidth]
                              {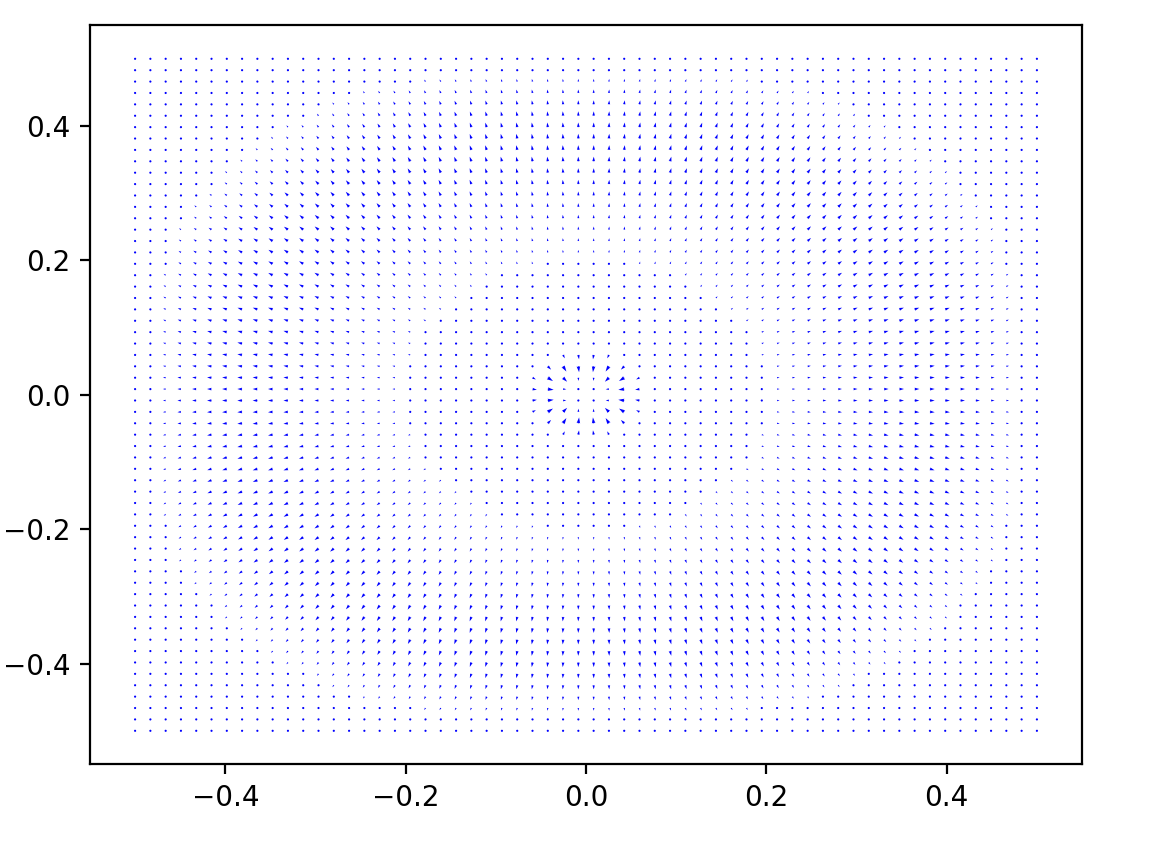}
              \caption{$t=0.303$}
            \end{subfigure}
            \begin{subfigure}[b]{0.24\linewidth}
              \includegraphics[width=\linewidth]
                              {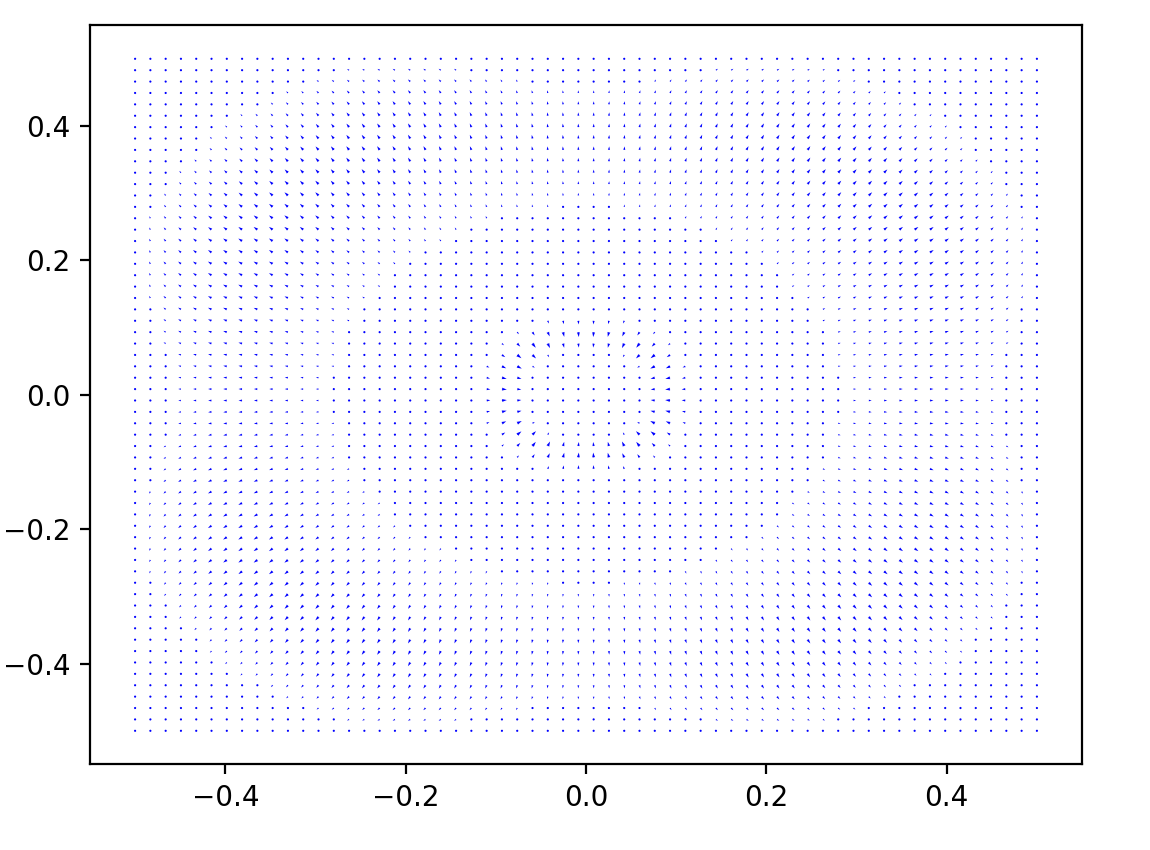}
              \caption{$t=0.352$}
            \end{subfigure}

          \end{center}
\end{figure}

Since the exact solution is unknown in this case, we use a numerical solution on a very fine mesh ($h = 2^{-9}, \tau=2^{-14}$) as reference solution
in order to approximate the error.
We observe that the error indeed scales as $\tau^2$, see Table~\ref{tab:erroreoc2}.  

\begin{table}
\caption{Error in energy norm and eoc up to time $\rT=0.2$. We use a mesh constant $h=2^{-9}$ and a reference time step $\tau = 2^{-14}$.}
\label{tab:erroreoc2}
 \begin{center}
\begin{tabular}{|c|c|c|c|c|}
\hline
$\tau$ & $ \| \omega_h- \omega_{\text{ref}}\|_{L^\infty(0,\rT;L^2(\Omega))} $ & EOC & $ \| \nabla u_h - \nabla u_{\text{ref}}\|_{L^\infty(0,\rT;L^2(\Omega))} $ & EOC  \\ \hline
$2^{-10}$ & 6.51e-04 & --- & 8.10e-04 & ---\\ \hline
$2^{-11}$& 2.13e-04 & 1.61 & 2.36e-04 & 1.78\\ \hline
$2^{-12}$ & 6.19e-05 & 1.78 & 6.51e-05 & 1.86\\ \hline
$2^{-13}$ & 1.28e-05 & 2.28 & 1.34e-05 & 2.28\\ \hline
 \end{tabular}
\end{center}
\end{table}

One of the goals we had in constructing the error estimator, 
i.e.\ the upper bound provided by combining Theorem \ref{thm:stab} and Lemma \ref{lem:compbounds}, was
that it is formally of optimal order, i.e.\ that it converges to zero with the same rate as the true error on equidistant meshes.
This is guaranteed as soon as the quantities 
$A^u, A^u_x, A^u_{xx}, A^\omega , A^\omega_x$, $B^u, B^u_x, B^u_{xx}, B^\omega , B^\omega_x$ are 
$\mathcal{O}(\tau)$ and $ C^u_x, C^u_{xx}, C^\omega , C^\omega_x$ are uniformly bounded in $\tau$.
This is indeed what we observe in the experiments we carried out. 
We do not report these numbers in detail but plot $\int_0^t \hat \alpha(s)\, ds$ and $\int_0^t \hat \delta(s)\, ds$ for several equidistant step-sizes $\tau$ and for different spatial mesh widths $h$, in Figure \ref{fig:ad}. Here $\hat \alpha, \hat \delta$ are computable upper bounds for $\alpha$ and $\delta$ based on  Lemma \ref{lem:compbounds}.
These results show that $\alpha$ and $\delta$ are independent of the spatial mesh width $h$, as they are supposed to be since we are quantifying time discretisation errors. We also observe that $\alpha$ is proportional to $\tau^2$ and $\delta$ is (essentially) independent of $\tau$. This confirms our theoretical predictions, i.e., optimal convergence order of the error estimator in the smooth regime.
  \begin{figure}[ht] 
\includegraphics[width=.5\textwidth]{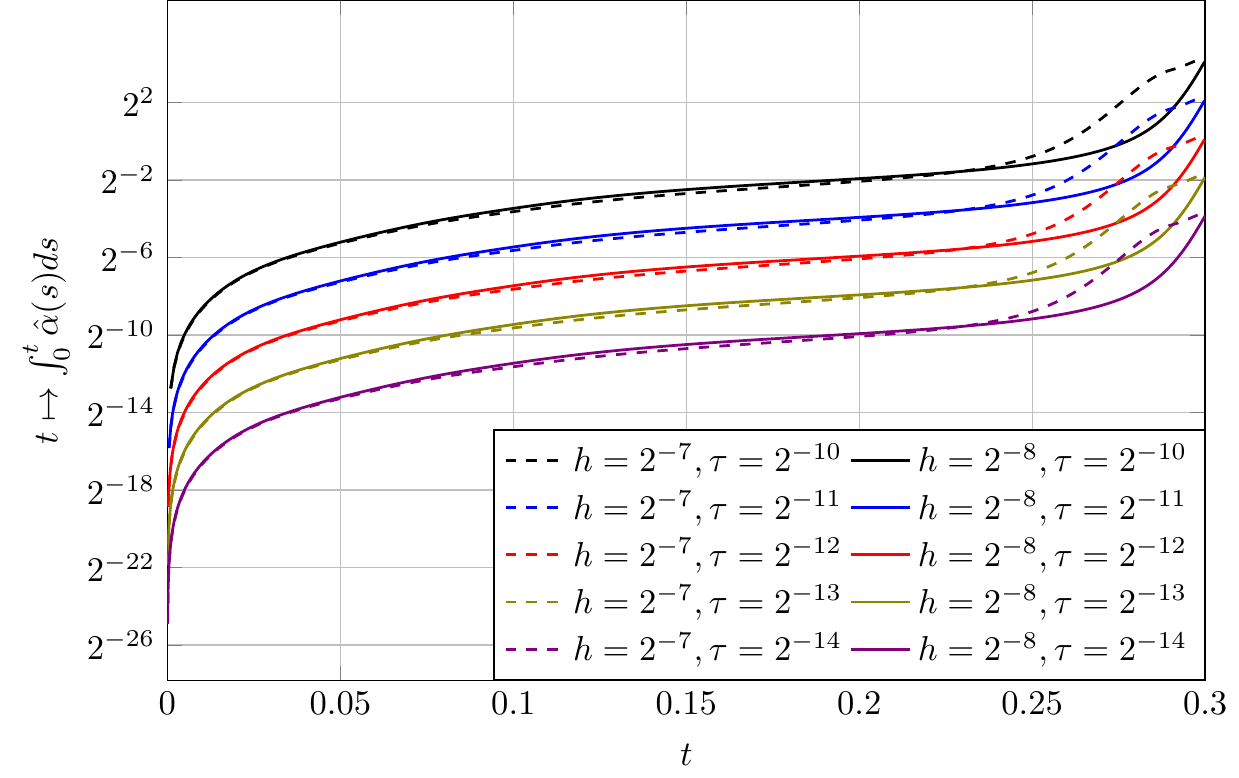}
\includegraphics[width=.5\textwidth]{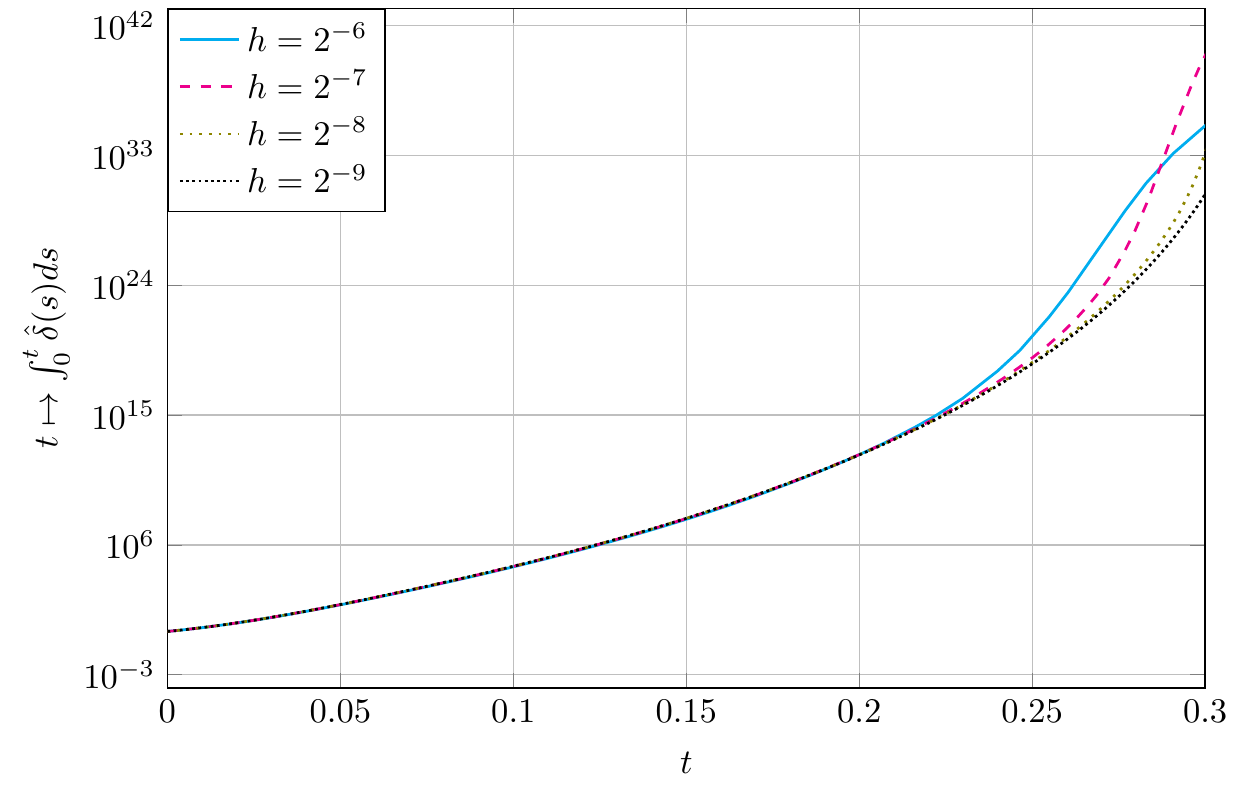}
\caption{Left:  Temporal evolution of the integral of computable upper bounds for $\alpha$ based on  Lemma \ref{lem:compbounds}. Note that, as long as the numerical solutions is ``smooth'', $\alpha$ is independent of the spatial mesh width $h$ and proportional to $\tau^2$. Once numerical solutions show a singularity, $\alpha$ does depend on $h$. Right:  Temporal evolution of the integral of computable upper bounds for $\delta$ based on  Lemma \ref{lem:compbounds}. Note that the bounds for $\delta$ are independent of $h$ and $\tau$ as long as the numerical solutions are ``smooth'', whereas $\delta$ is $h$ dependent once the numerical solutions display singularities. It should be noted that $\delta$ contains finite difference approximations of $\|\nabla u\|_{L^{2p}}$ and it is anticipated that if the gradient of the exact solution blows up the finite differences of the numerical solution approximating $\|\nabla u\|_\infty$ scale like $2/h$.}
\label{fig:ad}
 \end{figure} 

Due to the expected non-uniqueness of finite energy weak solutions we focus on time-stepping before and up-to singularity formation.
Standard algorithms for mesh adaptation aim at equi-distributing 
$\frac{1}{\tau_j} \int_{t_j}^{t_j + \tau_j}  \hat \alpha(s)\, ds$ amongst all time steps, i.e. the local step size is reduced if $\frac{1}{\tau_j} \int_{t_j}^{t_j + \tau_j} \hat \alpha(s)\, ds$ exceeds a given tolerance. 
As observed in \cite{Cangiani_Georgoulis_Kyza_Metcalfe_2016}, this strategy leads to excessive over-refinement close to blow-ups.
Moreover, 
equation \eqref{eq:stab_glo} shows that errors incurred in different time-steps are amplified by different factors. In order to obtain a scheme that is efficient up to blow-up, this needs to be taken into account and we follow a strategy that is similar to the one proposed in  \cite{Cangiani_Georgoulis_Kyza_Metcalfe_2016}, i.e. from the $j$-th to the $(j+1)$-th time step we increase the tolerance by a factor of $\exp(\tfrac 12\int_{t_{j-1}}^{t_j} \hat \delta(s) \, ds)$.

In Figure \ref{fig:adapt}, we compare values of the error estimator and time step sizes for the two time-stepping strategies described above.
 \begin{figure}[ht] 
\includegraphics[width=.5\textwidth]{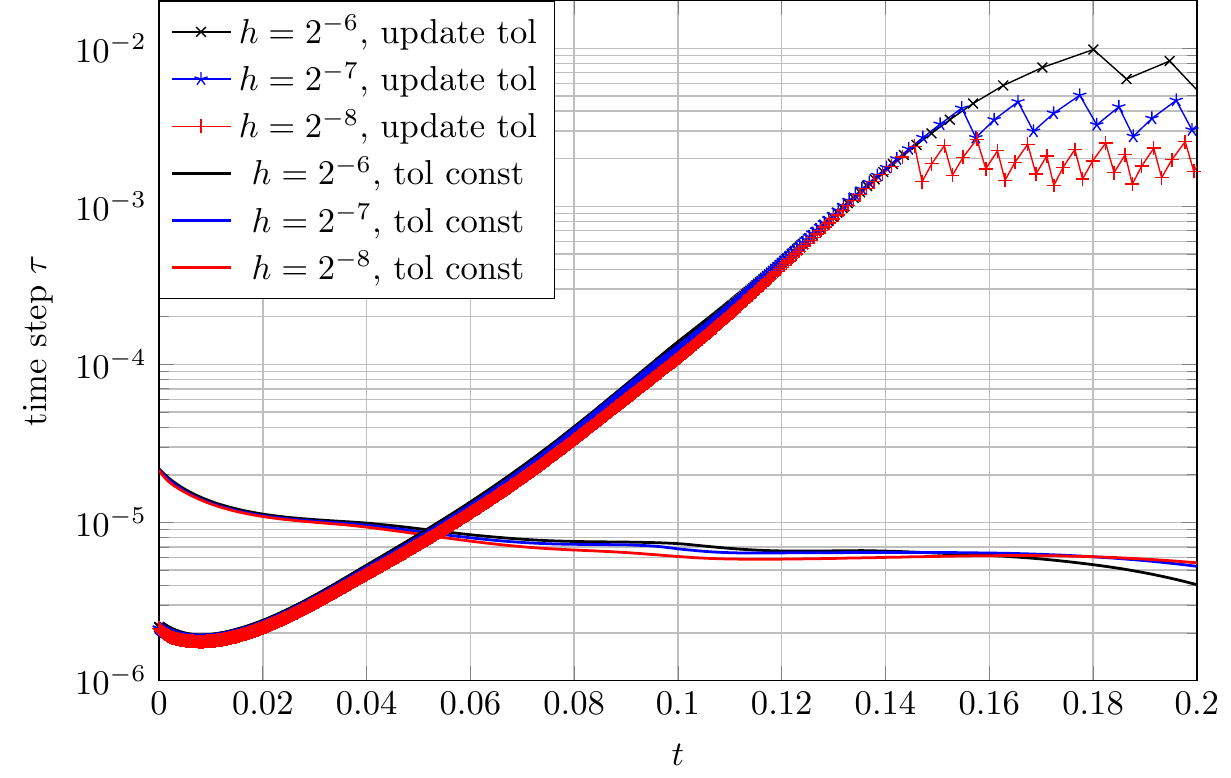}\includegraphics[width=.5\textwidth]{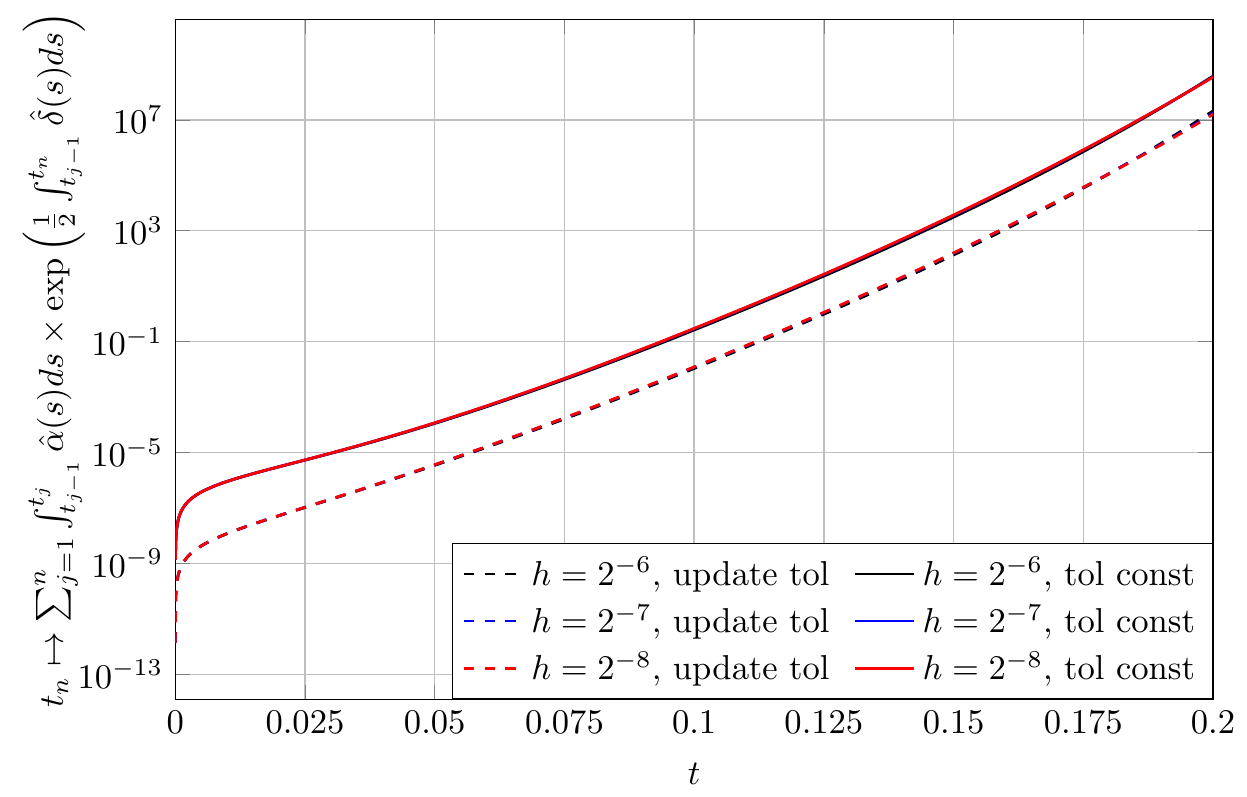}
\caption{Time step sizes (left) and values of the combined error estimator from Theorem \ref{thm:stab} and Lemma \ref{lem:compbounds} (right) as functions of time for different (equidistant) spatial meshes. We compare simulations with time step adaptation using a fixed tolerance $10^{-4}$ to ones using updated tolerances starting at $10^{-6}$, so that at time $t=0.17$ the number of computed time steps is comparable (approx. $21000$).}
\label{fig:adapt}
 \end{figure} 
 It turns out that the scheme using the updated error tolerances uses much smaller step sizes in the beginning and larger step sizes at later times than the equidistribution strategy. 
 We observe that at time $t=0.17$ the strategy using updated error tolerances has used fewer time steps but leads to a smaller error bound. 
 
We observe in Figure \ref{fig:adapt} that for time stepping with updated tolerances time step sizes stop increasing beyond a certain point in time. This is due to the fact that for time step sizes larger than some threshold value $\tau_{\text{thresh}} (h)>0$ the fixed point iteration that we use for solving the non-linear system \eqref{scheme} in each time step, see \cite[Algo. 2]{Bartels_2015} for details, does not converge. This results in the time step being automatically reduced independently of the error tolerance.

%

 \section*{Appendix} 
 This appendix is devoted to the proof of Lemma \ref{lem:compbounds}.
 Here, each quantity that is defined continuously in time is to be understood as its restriction to $(t_n,t_{n+1})$.
 Let us begin by giving explicit formulae for certain parts of $r_u$ and $r_\omega$. The first such expressions make explicit the difference between some piecewise quadratic, globally continuous function and its piecewise linear interpolations:
\begin{equation}\label{eq:maitri}
 \begin{aligned}
  u^*(t) - \widehat u(t) &= \frac 1 2 \frac{(t-t_n)(t_{n+1}-t)}{t_{n+1}-t_n} \left( u^{n+1} \times \omega^{n+1} - u^n \times \omega^n\right),\\
  \tw(t) - \widehat \omega(t) &= \frac 1 2 \frac{(t-t_n)(t_{n+1}-t)}{t_{n+1}-t_n} \left( \Delta u^{n+1} \times u^{n+1} - \Delta u^n \times u^n\right),\\
  \widehat u(t) \times \widehat w(t) - I_1(\widehat u \times \widehat w)(t) &= - \frac{(t-t_n)(t_{n+1}-t)}{(t_{n+1}-t_n)^2}  (u^{n+1} - u^n) \times (\omega^{n+1} - \omega^n).
 \end{aligned}
\end{equation}

\subsection*{Controlling $|\widehat u|$, $| u^*|$}
Let us now study how far away $\widehat u, u^*$ are from maps into the sphere:
If  $u^{n+1}-u^n$ is sufficiently small, a geometric argument implies 
\begin{equation*}
\Big| 1-|\widehat u| \Big| \leq |u^{n+1}-u^n|^2= (A^u)^2, \qquad \Big| 1-|u^*| \Big| 
\leq \Big| 1- |\widehat u| \Big|  + \Big| \widehat u - u^*\Big|  \leq  (A^u)^2 + \tau_n B^u
\end{equation*}
and, therefore,
\begin{equation}\label{eq:tuus}
 \Big|\tu - u^*\Big| =  \Big|\tu \Big(1-\Big|u^*\Big|\Big)\Big| \leq |u^{n+1}-u^n|^2= (A^u)^2+ \tau_n B^u.
\end{equation}
Thus, the conditions in Lemma \ref{lem:compbounds} imply
\begin{equation}\label{absvals}
 \frac 3 4 \leq |\widehat u| \leq \frac 54 , \qquad  \frac 12 \leq |u^*| \leq \frac 32.
\end{equation}

\subsection*{Estimating $r_{u,1}$}
Since $\tu(t_n)=\widehat u(t_n)$ and $\tw(t_n)=\widehat \omega(t_n) $ we  may rewrite $r_{u,1} $ as
\begin{equation}\label{eq:ru1} r_{u,1} = \tilde u \times (\tw - \widehat \omega) + (\tu - u^*) \times \widehat \omega + ( u^*- \widehat u) \times \widehat \omega   + \widehat u \times \widehat w - I_1(\widehat u \times \widehat w)\end{equation}
such that
\begin{multline} \Big| r_{u,1} \Big| \leq \tau_n (\Delta \tu^{n+1} \times \tu^{n+1} - \Delta \tu^{n} \times \tu^{n} )
2  |u^{n+1} - u^n|\, |\omega^{n+1} - \omega^n| \\ + \max\{|\omega^n|,|\omega^{n+1}|\} \left[|u^{n+1}-u^n|^2+ |t_{n+1}-t_n|\ |u^{n+1} \times \omega^{n+1} - u^n \times \omega^n|\right]\,.
\end{multline}
This proves \eqref{bound:ru1}.

We obtain \eqref{bound:nru1} by applying the product rule to \eqref{eq:ru1} since $|\widehat u(t,x)|\leq 1$.
We also use the fact that
 $\nabla \tu$ is the projection of $\nabla u^*$ onto the tangent space of the sphere, so that,  provided $|u^{n+1}-u^n|< 1/2$ holds, we have the point-wise estimate
\begin{equation}\label{eq:pwgrad} |\nabla \tu| \leq 2 |\nabla u^*|.  \end{equation}                                                                  

 \subsection*{Estimating $r_{u,3}$}
 The key to estimating $r_{u,3}$ is to control $\partial_t u^* \cdot u^*$. We notice that
 \begin{equation}\label{ptuu}
  \partial_t u^* \cdot u^* = (I_1[\widehat u \times \widehat \omega] + a_u) \cdot (\widehat u + (u^* - \widehat u)) 
 \end{equation}
and, thus, 
 \begin{equation}\label{ptuu2}
 | \partial_t u^* \cdot u^* | \leq  | I_1[\widehat u \times \widehat \omega]\cdot \widehat u |  + A^u A^\omega (1 + \tau_n B^u) + C^\omega  \tau_n B^u
 \end{equation}
 so that it remains to understand  $I_1[\widehat u \times \widehat \omega]\cdot \widehat u$. Using orthogonality, we obtain
 \begin{align}\begin{split}\label{ptuu3}
  I_1[\widehat u \times \widehat \omega]\cdot \widehat u &= [ \ell_n^0(t) u^n\times \omega^n + \ell_n^1(t) u^{n+1}\times \omega^{n+1} ]  \cdot [ \ell_n^0(t) u^n + \ell_n^1(t) u^{n+1} ] \\
  &=  \ell_n^0(t) \ell_n^1(t) [ u^{n+1} \cdot (u^n\times \omega^n ) + u^{n} \cdot (u^{n+1}\times \omega^{n+1} )]\\ 
  &=  \ell_n^0(t) \ell_n^1(t)\det(u^{n+1},u^n,\omega^{n+1}-\omega^n)\\
  &=  \ell_n^0(t) \ell_n^1(t)\det(u^{n+1}-u^n,u^n,\omega^{n+1}-\omega^n).\end{split}
 \end{align}
 We insert \eqref{ptuu3} into \eqref{ptuu} and obtain
  \begin{equation}\label{ptuu4}
 | \partial_t u^* \cdot u^* | \leq   A^u A^\omega (2 + \tau_n B^u) + C^\omega  \tau_n B^u\,.
 \end{equation}
Moreover, due to   
\eqref{absvals} we arrive at
\begin{equation}\label{ru31}
 \left| 1 - \frac 1 {|u^*|} \right| =  \left| 1 - \frac 1 {|\widehat u|}  - \frac{ | u^* |- | \widehat u|} { | u^* | | \widehat u|}\right| \leq \frac 43 (A^u)^2 +\frac 83 \tau_n B^u
\end{equation}
Using \eqref{ru31} and \eqref{ptuu4} we obtain:
\begin{equation}
|r_{u,3}| \leq (C^\omega + \frac 14 A^u A^\omega) \left(  \frac 43 (A^u)^2 +\frac 83 \tau_n B^u\right) + 4 A^u A^\omega (2 + \tau_n B^u) +4 C^\omega  \tau_n B^u.
\end{equation}
Let us note that for 
\begin{multline}\label{eq:nru31}
 \nabla r_{u,3}= \nabla \partial_t u^*  \left( 1 - \frac 1 {|u^*|}\right) + \nabla (I_1[\widehat u\times \widehat \omega] \cdot \widehat u) + \nabla a_u \cdot u^* \\
 +a_u \cdot \nabla u^* + \nabla (I_1[\widehat u\times \widehat \omega] \cdot (u^* - \widehat u))
 +  \partial_t u^*  \nabla \left( 1 - \frac 1 {|u^*|}\right)
\end{multline}
we have suitable bounds for all terms on the right hand side of \eqref{eq:nru31} except for $\nabla \left( 1 - \frac 1 {|u^*|}\right)$, e.g.,  $\nabla (I_1[\widehat u\times \widehat \omega] \cdot  \widehat u)$ can be estimated by applying the product rule to \eqref{ptuu3}.
As a first step towards estimating for $\nabla \left( 1 - \frac 1 {|u^*|}\right)= - \frac{u^* \cdot \nabla u^*}{|u^*|^3}$ we compute
\begin{equation}\label{eq:nru32}
\partial_{x_j}\! \left( 1 - \frac 1 {|u^*|}\right)=\frac{ \partial_{x_j} \widehat u \cdot \widehat u + \partial_{x_j} (u^* - \widehat u) \cdot \widehat u + \partial_{x_j} \widehat u\cdot\! (u^* - \widehat u) + \partial_{x_j} (u^*- \widehat u)\cdot (u^* - \widehat u)}{|u^*|^3}.
\end{equation}
We recall $|u^n|=1$, which implies $\partial_{x_j} u^n \cdot u^n=0$, for all $n$, so that
\begin{multline}\label{eq:nru33}
\partial_{x_j} \widehat u \cdot \widehat u = \partial_{x_j} (\ell_n^0 (t) u^n + \ell_n^1 (t) u^{n+1}) \cdot (\ell_n^0 (t) u^n + \ell_n^1 (t) u^{n+1})\\ = \ell_n^0 (t)\ell_n^1(t) ( \partial_{x_j} u^n \cdot  u^{n+1} + \partial_{x_j} u^{n+1} \cdot  u^{n} )
 = - \ell_n^0 (t)\ell_n^1(t) \partial_{x_j} ( u^{n+1} - u^n )\cdot  ( u^{n+1} - u^n ).
\end{multline}
We insert \eqref{eq:nru33} into \eqref{eq:nru32} and obtain
\begin{equation}\label{eq:nru34}
\left|\nabla \left( 1 - \frac 1 {|u^*|}\right)\right| \leq 8[ A^u_x A^u + \tau_n B^u_x + C^u_x \tau_n B^u + \tau_n^2 B^u_x B^u]
\end{equation}
Thus, we obtain
\begin{multline}\label{eq:nru35}
| \nabla r_{u,3}| \leq 
 \left( C^u_x C^\omega + C^\omega_x + \frac 14 A^u_x A^\omega +\frac 14 A^u A^\omega_x\right) \left(\frac 43 (A^u)^2 +\frac 83 \tau_n B^u\right)\\ + \frac 32 A^u_x A^\omega +2 A^u C^u_x A^\omega + \frac 32 A^u A^\omega_x  
+ \left(C^u_x C^\omega + C^\omega_x \right) \tau_n B^u + C^\omega \tau_n B^u_x\\ + 8\left(C^\omega + \frac 14 A^u A^\omega\right) \left( A^u_x A^u + \tau_n B^u_x + C^u_x \tau_n B^u + \tau_n^2 B^u_x B^u\right).
\end{multline}
 This completes providing bounds for the different components of $r_u$ and $\nabla r_u$.
 
 \subsection*{Estimating $r_\omega$}
 Obviously,
 $ |r_{\omega,2}|= |a^\omega| \leq \frac 14 A^u A^u_{xx}$
 and 
 \begin{equation}\label{eq:karuna} 
  r_{\omega,1} = (\Delta \tu - \Delta u^*)\times \tu + (\Delta u^* - \Delta \widehat u)\times \tu + \Delta \widehat u \times (\tu - u^*) + \Delta \widehat u \times (u^*-\widehat u) + \Delta \widehat u \times \widehat u - I_1[\Delta \widehat u \times \widehat u].
 \end{equation}
We insert \eqref{eq:tuus} and \eqref{eq:maitri} into \eqref{eq:karuna} and obtain
 \begin{equation}\label{eq:mudito} 
  |r_{\omega,1}| \leq |(\Delta \tu - \Delta u^*) | + \tau_n B^u  + C^u_{xx} (A^u)^2 + C^u_{xx}\tau_n B^u + A^u_{xx} A^u\,,
 \end{equation}
 where we have used that
\[ \Delta \widehat u \times \widehat u - I_1[\Delta \widehat u \times \widehat u] =- \ell^0_n(t)\ell^1_n(t)  (\Delta u^{n+1}-\Delta u^n) \times (u^{n+1} - u^n).\]
It remains to provide an estimate for $|\Delta \tu - \Delta u^*|$.
We note that $u^*=\tu |u^*|$ and, thus,
\begin{align}
 \Delta (\tu - u^*) = \Delta u^* \left( 1 - \frac {1}{|u^*|} \right) - \frac{\sum_j \partial_{x_j} \tu \cdot \partial_{x_j} |u^*| }{ |u^*| } - \frac{ \tu \Delta  |u^*| }{ |u^*| }, 
\end{align}
so that
\begin{align}
 \begin{split}
\label{eq:623}
| \Delta (\tu - u^*) | &\leq   ( C^u_{xx}+ \tau_n B^u_{xx}) \left( \frac 43 (A^u)^2+ \frac 83 \tau_n B^u\right)  + |\nabla \tu|  \frac{|u^* \cdot \nabla u^*| }{ |u^*|^2 } + \frac{ |\Delta  |u^*| \, | }{ |u^*| }\\
&\leq  ( C^u_{xx}+ \tau_n B^u_{xx}) \left( \frac 43 (A^u)^2+ \frac 83 \tau_n B^u\right)
\\
&+(C^u_x + \tau_n B^u_x)2[ A^u_x A^u + \tau_n B^u_x + C^u_x \tau_n B^u + \tau_n^2 B^u_x B^u]
+ \frac{ |\Delta  |u^*| \, | }{ |u^*| }
\end{split}
\end{align}
where we have used \eqref{eq:nru34}.
Orthogonality $\partial_{x_j} u^n \perp u^n$  for all $j$ and all $n$ implies
\begin{equation}\label{eq:624}
 \Delta  |u^*| = - \ell_n^0 (t)\ell_n^1(t)[(u^{n+1} -u^n)\cdot \partial_{x_j}^2 (u^{n+1} -u^n) + |\partial_{x_j} (u^{n+1} -u^n)|^2] .
\end{equation}
Inserting \eqref{eq:624} into \eqref{eq:623} implies
\begin{multline}\label{eq:625}
| \Delta (\tu - u^*) |
\leq  ( C^u_{xx}+ \tau_n B^u_{xx}) \left( \frac 43 (A^u)^2+ \frac 83 \tau_n B^u\right)  + A^u_{xx} A^u + (A^u_x)^2\\
+(C^u_x + \tau_n B^u_x)2[ A^u_x A^u + \tau_n B^u_x + C^u_x \tau_n B^u + \tau_n^2 B^u_x B^u]
\end{multline}
Inserting \eqref{eq:625} into \eqref{eq:mudito} completes the bound for $r_\omega$.

\subsection*{Estimating $r_g$}
We note that orthogonality $u^n \perp \omega^n$ implies
\begin{multline}
 \tu \cdot \tw = \tu\cdot(\tw - \widehat \omega ) - (\tu - \widehat u)\cdot \widehat \omega  + \widehat u \cdot \widehat \omega \\
 =  \tu\cdot(\tw - \widehat \omega ) - (\tu - \widehat u)\cdot \widehat \omega  - \ell_n^0 (t)\ell_n^1(t)(u^{n+1} -u^n)\cdot (\omega^{n+1} -\omega^n)
\end{multline}
so that
\begin{equation}
 |  \tu \cdot \tw| \leq \tau_n B^\omega + C^\omega( (A^u)^2 + \tau_n B^u) + A^u A^\omega.
\end{equation}
Thus, using the definition of $r_g$
\begin{align*}
 | r_g | &\leq (C^\omega + \tau_n B^\omega)) [\tau_n B^\omega + C^\omega( (A^u)^2 + \tau_n B^u) + A^u A^\omega] \\
 &\qquad\qquad + [\tau_n B^\omega + C^\omega( (A^u)^2 + \tau_n B^u) + A^u A^\omega]^2.
\end{align*}

 \bibliographystyle{siamplain}
 \bibliography{wavemaps}
   
\end{document}